\pgfplotsset{compat=1.15}
\newcommand{\Dc}{\mathcal{D}}
\newcommand{\cB}{\mathcal{B}}
\newcommand{\cC}{\mathcal{C}}
\newcommand{\cD}{\mathcal{D}}
\newcommand{\Ec}{\mathcal{E}}
\newcommand{\cP}{\mathcal{P}}
\newcommand{\Pc}{\mathcal{P}}
\newcommand{\Qc}{\mathcal{Q}}
\newcommand{\Lc}{\mathcal{L}}
\newcommand{\cF}{\mathcal{F}}
\newcommand{\cR}{\mathcal{R}}
\newcommand{\Rc}{\mathcal{R}}
\newcommand{\rHP}{\mathrm{HP}}
\renewcommand{\qedsymbol}{$\square$}
\newcommand{\rev}{\mathrm{rev}}
\def\opn#1#2{\def#1{\operatorname{#2}}} 
\opn\chara{char} \opn\length{\ell} \opn\pd{pd} \opn\rk{rk}
\opn\projdim{proj\,dim} \opn\injdim{inj\,dim} \opn\rank{rank}
\opn\depth{depth} \opn\grade{grade} \opn\height{height}
\opn\embdim{emb\,dim} \opn\codim{codim}
\opn\Tr{Tr} \opn\bigrank{big\,rank}
\opn\superheight{superheight}\opn\lcm{lcm}
\opn\trdeg{tr\,deg}
	\opn\reg{reg} \opn\lreg{lreg} \opn\ini{in} \opn\lpd{lpd}
	\opn\size{size} \opn\sdepth{sdepth}
	\opn\link{link}\opn\fdepth{fdepth}\opn\lex{lex}\opn\dist{dist}
	\opn\div{div} \opn\Div{Div} \opn\cl{cl} \opn\Cl{Cl}
	\opn\Spec{Spec} \opn\Supp{Supp} \opn\supp{supp} \opn\Sing{Sing}
	\opn\Ass{Ass} \opn\Min{Min}\opn\Mon{Mon}
	\opn\Ann{Ann} \opn\Rad{Rad} \opn\Soc{Soc}
	\opn\Im{Im} \opn\Ker{Ker} \opn\Coker{Coker} \opn\Am{Am}
	\opn\Hom{Hom} \opn\Tor{Tor} \opn\Ext{Ext} \opn\End{End}
	\opn\Aut{Aut} \opn\id{id}
	\opn\nat{nat}
	\opn\pff{pf}
	\opn\Pf{Pf} \opn\GL{GL} \opn\SL{SL} \opn\mod{mod} \opn\ord{ord}
	\opn\Gin{Gin} \opn\Hilb{Hilb}\opn\sort{sort}
	\opn\aff{aff} \opn
\opn\relint{relint} \opn\st{st}
	\opn\lk{lk} \opn\cn{cn} \opn\core{core} \opn\vol{vol}
	\opn\link{link} \opn\star{star}\opn\lex{lex}\opn\set{set}
	\opn\gr{gr}
	\def\pot#1#2{#1[\kern-0.28ex[#2]\kern-0.28ex]}
	\opn\dirlim{\underrightarrow{\lim}}
	\opn\inivlim{\underleftarrow{\lim}}
	\let\tensor=\otimes
	\let\to=\rightarrow
	\def\Implies{\ifmmode\Longrightarrow \else
		\unskip${}\Longrightarrow{}$\ignorespaces\fi}
	\def\implies{\ifmmode\Rightarrow \else
		\unskip${}\Rightarrow{}$\ignorespaces\fi}
	\def\iff{\ifmmode\Longleftrightarrow \else
		\unskip${}\Longleftrightarrow{}$\ignorespaces\fi}
	\let\gets=\leftarrow
	\let\epsilon\varepsilon
	\let\kappa=\varkappa
	\def\qed{\ifhmode\textqed\fi
		\ifmmode\ifinner\quad\qedsymbol\else\dispqed\fi\fi}
	\def\textqed{\unskip\nobreak\penalty50
		\hskip2em\hbox{}\nobreak\hfil\qedsymbol
		\parfillskip=0pt \finalhyphendemerits=0}
	\def\dispqed{\rlap{\qquad\qedsymbol}}
	\opn\dis{dis}
	\def\pnt{{\raise0.5mm\hbox{\large\bf.}}}
	\opn\Lex{Lex}
        \newtheorem{Theorem}{Theorem}[section]
	\newtheorem{Lemma}[Theorem]{Lemma}
	\newtheorem{Corollary}[Theorem]{Corollary}
	\newtheorem{Proposition}[Theorem]{Proposition}
	\newtheorem{Remark}[Theorem]{Remark}
	\newtheorem{Example}[Theorem]{Example}
	\newtheorem{Conjecture}[Theorem]{Conjecture}
        \newtheorem{Notation}[Theorem]{Notation}
                \newtheorem{Assumption}[Theorem]{Assumption}
       \newtheorem{Setup}[Theorem]{Setup}
\begin{document}

        \title[Switching rook polynomial of collections of cells]{Switching rook polynomial of collections of cells}

     \author{Francesco Navarra}
    \address{Sabanci University, Faculty of Engineering and Natural Sciences, Orta Mahalle, Tuzla 34956, Istanbul, Turkey}
     \email{francesco.navarra@sabanciuniv.edu}
    
     \author{Ayesha Asloob Qureshi}      
     \address{Sabanci University, Faculty of Engineering and Natural Sciences, Orta Mahalle, Tuzla 34956, Istanbul, Turkey}	
     \email{aqureshi@sabanciuniv.edu, ayesha.asloob@sabanciuniv.edu}

      \author{Giancarlo Rinaldo}      
      \address{Department of Mathematics and Computer Sciences, Physics and Earth Sciences, University of Messina, Viale Ferdinando Stagno d'Alcontres 31, 98166 Messina, Italy}
      \email{giancarlo.rinaldo@unime.it}

	\keywords{Polyomino, rook polynomial, algorithms, Castelnuovo-Mumford regularity, Hilbert-Poincar\'{e} series.}
	\subjclass[2020]{05A15, 05B50, 05E40, 68W30.}

    
	\thanks{The second and third authors are supported by Scientific and Technological Research Council of Turkey T\"UB\.{I}TAK under the Grant No: 124F113, and are thankful to T\"UB\.{I}TAK for their support. The first author is supported by Scientific and Technological Research Council of Turkey T\"UB\.{I}TAK under the Grant No: 122F128, and is thankful to T\"UB\.{I}TAK for their supports. The first and third authors are members of INDAM-GNSAGA and acknowledge its support.
The authors are grateful to the HPC group at Sabancı University for installing \texttt{Macaulay2} on the Tosun cluster and for providing access to such a valuable computational resource. Moreover, we would like to thank D. Grayson, M. Stillman, and D. Torrance for their personal communications and insightful suggestions.}


\begin{abstract}
We explore the novel connection between rook placements on collections of cells, also known as pruned chessboards, and the algebraic properties of ideals generated by $2$-minors. We design an algorithm to compute the switching rook polynomial of a collection of cells and show that it coincides with the $h$-polynomial of the associated coordinate ring for all collections up to rank 10 and polyominoes up to rank 12. Motivated by this evidence, we conjecture that the correspondence holds in general, and we prove it for certain convex collections of cells by algebraic tools.  
\end{abstract}


\maketitle


\section*{Introduction}

Polyominoes are planar figures formed by joining unit squares edge to edge. Originating in recreational mathematics and classical combinatorics, they have been extensively studied in connection with tiling and enumeration problems; see \cite{G}. The study of polyomino ideals, initiated in \cite{Q}, has attracted considerable attention due to the rich interplay between the combinatorial structure of a polyomino and the algebraic properties of its associated binomial ideal. Given a field $K$ and a polyomino $\mathcal{P}$, the polyomino ideal $I_{\mathcal{P}}$ encodes the adjacency relations between the cells of $\mathcal{P}$, and its quotient ring $K[\mathcal{P}] = S_{\cP}/I_{\mathcal{P}}$ captures important algebraic invariants deeply influenced by the geometry of $\mathcal{P}$. Since their first appearance in 2012, polyomino ideals have been the subject of a broad and growing body of literature; see, for example, \cite[Chapter 8]{HHO_Book}, \cite{EHQR, JN, QRR, QSS, RR, KV}, and the references therein. Let $X$ be an $m \times n$ matrix of indeterminates. Polyomino ideals are generated by certain sets of $2$-minors of $X$, generalizing the notion of determinantal ideals $I_t(X)$ for $t=2$, and include several well-known classes of binomial ideals, such as those generated by $2$-minors of ladder shapes and the join-meet ideals of planar distributive lattices (toric ideal of Hibi rings). We refer reader to \cite{QRR} for more details. 

The initial motivation for connecting polyominoes with binomial ideals was to facilitate the study of ideals generated by $2$-minors and to translate their algebraic properties into combinatorial aspects of polyominoes. In recent years, however, a surprising and elegant connection has emerged between polyomino ideals and a classical object in enumerative combinatorics: the rook polynomial, originally introduced to count non-attacking rook placements on chessboards. For instance, the number of ways to place $k$ indistinguishable non-attacking rooks on an $m \times n$ rectangular chessboard $\mathcal{B}_{m,n}$ is captured by the well-known rook polynomial
\[
r_{\mathcal{B}_{m,n}}(t) = \sum_{k=0}^{n} \binom{m}{k} P(n,k) t^k,
\]

where $P(n,k) = n(n-1)\cdots(n-k+1)$. A convex polyomino can be viewed as a ``mutilated" or ``pruned" chessboard, possibly with corners removed. Studying rook polynomials in this broader setting offers further combinatorial insights; see \cite{AJ, GR, KR}, and the references therein. Interestingly, algebraic invariants of the polyomino ideal, such as the Hilbert series and the Castelnuovo–Mumford regularity of $K[\mathcal{P}]$, encode combinatorially meaningful data: namely, the switching rook polynomial (also referred to as the nested rook polynomial in \cite{AJ}) and the rook number of $\mathcal{P}$, respectively. This connection was first observed in \cite{EHQR}, where the authors showed that the Castelnuovo–Mumford regularity of $K[\mathcal{P}]$ coincides with the rook number of $\mathcal{P}$ when $\mathcal{P}$ is an L-convex polyomino. The connection was further explored in \cite{RR}, where it was observed that for simple thin polyominoes, the rook polynomial coincides with the $h$-polynomial of $K[\mathcal{P}]$, and the rook number again matches the Castelnuovo–Mumford regularity. Subsequently, 
Qureshi, Rinaldo and Romeo proposed the following conjecture:
\begin{Conjecture}
    \cite[Conjecture 3.2]{QRR}\label{conj:into} 
    If $\Pc$ is a simple polyomino (a polyomino without holes), then its switching rook polynomial coincides with the $h$-polynomial of the associated coordinate ring $K[\Pc]$.
\end{Conjecture}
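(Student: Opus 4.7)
The plan is to prove the conjecture by Gröbner deformation and shelling: convert $I_{\Pc}$ into a squarefree monomial ideal whose Stanley-Reisner complex can be analyzed combinatorially, and then identify the $h$-vector of that complex with the switching rook polynomial. First, I would invoke the known fact that for a simple polyomino $\Pc$ the coordinate ring $K[\Pc]$ is a normal Cohen-Macaulay domain, so that $h_{K[\Pc]}(t)$ is a well-defined nonnegative polynomial whose degree equals $\reg K[\Pc]$. Next, fix a diagonal-type monomial order $<$ on $S_{\Pc}$ under which every generator of $I_{\Pc}$ (an inner $2$-minor) has leading term equal to the product of its two antidiagonal corners. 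For many classes of simple polyominoes treated in \cite{QRR, RR}, the corresponding initial ideal $\ini_<(I_{\Pc})$ is a squarefree monomial ideal, and a first component of the plan would be to extend this squarefreeness to all simple polyominoes, possibly by an induction on the number of inner corners.

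Assuming the initial ideal is squarefree, let $\Delta_{\Pc}$ be the Stanley-Reisner complex of $\ini_<(I_{\Pc})$; then $h_{K[\Pc]}(t)=h_{K[\Delta_{\Pc}]}(t)$. A face of $\Delta_{\Pc}$ is precisely a set of cells containing no two antidiagonal corners of a common inner interval, which is in natural bijection with a partial non-attacking rook placement on $\Pc$. The second step is to construct a shelling of $\Delta_{\Pc}$ whose restriction faces are exactly the canonical representatives of switching classes of rook placements: each switching move on rooks corresponds to exchanging diagonal and antidiagonal corners of a $2\times 2$ subarray, which can be ordered lexicographically via the cell coordinates. With such a shelling, Stanley's formula for the $h$-vector of a shellable complex would yield
\[
h_{K[\Pc]}(t)\;=\;\sum_{k\geq 0}\#\{\text{switching classes of }k\text{-rook placements on }\Pc\}\, t^k\;=\;r_{\Pc}(t).
\]

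The principal obstacle is the construction of the shelling and the identification of its restriction faces with switching classes. For convex polyominoes (handled in the paper) the row and column intervals have enough regularity to admit a natural lex order on rook placements, and canonical representatives can be defined by pushing each rook as far ``northwest'' as possible; for a general simple polyomino with irregular boundary the existence of a globally consistent canonical form is unclear. A parallel strategy I would pursue is an induction on the number of cells: remove a boundary cell $c$, use a deletion-type identity for the switching rook polynomial, and match it with a short exact sequence relating $K[\Pc]$ and $K[\Pc\setminus\{c\}]$ via the variable attached to $c$. Making this short exact sequence produce clean Hilbert series---in particular, showing that the relevant quotient rings are again polyomino rings of simple polyominoes, possibly after a linear change of variables---is, in my view, the hardest technical step of the program.
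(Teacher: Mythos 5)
This statement is a \emph{conjecture} that the paper does not prove in general: the paper verifies it computationally up to rank $12$ and proves it (Theorem~\ref{Thm:main}) only for collections of cells with convex weakly connected components satisfying Condition~($\#$) or Condition~($\#'$). Your proposal is a program rather than a proof, and its main route has a concrete gap at the very first step. You assume that, for a suitable ``diagonal-type'' order, $\ini_<(I_{\Pc})$ is the squarefree quadratic ideal generated by the products of antidiagonal corners of inner intervals, so that the faces of its Stanley--Reisner complex are exactly the non-attacking rook placements. But by Theorem~\ref{Theorem: Qureshi condition to have Groebner basis}, the inner $2$-minors form a Gr\"obner basis with respect to $<_{\rev}$ (resp.\ $<_{\lex}$) \emph{if and only if} $\Pc$ satisfies Condition~($\#$) (resp.\ ($\#'$)); for a general simple polyomino these conditions fail, the initial ideal acquires generators of higher degree, and the face/rook bijection breaks down. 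The issue is not squarefreeness of the initial ideal but the presence of non-quadratic generators, so ``extending squarefreeness to all simple polyominoes'' would not rescue the argument. Even where the quadratic Gr\"obner basis exists, the identification of the $h$-vector with the number of switching classes is precisely the hard combinatorial content; positing a shelling whose restriction faces are canonical representatives of switching classes restates the conjecture rather than proving it (also note your final display should read $\tilde r_{\Pc}(t)$, the switching rook polynomial, not $r_{\Pc}(t)$).

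Your ``parallel strategy'' in the last paragraph is, in essence, what the paper actually does in the case it can handle: it deletes the top-left corner cell $X$ and uses the exact sequence $0 \to S_{\Pc}/(\ini_{<_{\rev}}(I_\Pc):x_v) \to S_{\Pc}/\ini_{<_{\rev}}(I_\Pc) \to S_{\Pc}/(\ini_{<_{\rev}}(I_\Pc),x_v) \to 0$ on the variable $x_v$ attached to the top-left corner, identifies the outer terms with coordinate rings of smaller collections $\Pc'=\Pc\setminus\{X\}$ and $\Pc''$ (obtained by excising the Ferrers diagram $\Pc_v$ and regluing), and matches the resulting recursion $h_{K[\Pc]}(t)=\tilde r_{\Pc'}(t)+t\,\tilde r_{\Pc''}(t)$ with the corresponding deletion recursion for switching rook configurations according to whether a rook sits on $X$. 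You correctly identify the hardest technical step of this route: showing that the colon and quotient rings are again coordinate rings of collections of the same type. The paper achieves this only under convexity plus Condition~($\#$)/($\#'$) (Lemma~\ref{Lemma: Polyomino P'}, Proposition~\ref{Prop: Polyomino P''}, Proposition~\ref{prop:setup}); for an arbitrary simple polyomino neither your shelling route nor this inductive route is carried out, and the conjecture remains open.
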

Since then, several researchers have investigated various classes of polyominoes to further explore the relationship between rook polynomials and their algebraic counterparts. In this article, we significantly extend the previous conjecture to the more general setting of arbitrary collections of cells. We computationally verify that the switching rook polynomial of a collection of cells coincides with the $h$-polynomial of the associated coordinate ring for all collections up to rank 10 and for all polyominoes up to rank 12. Furthermore, we show that the conjecture holds for convex polyominoes $\mathcal{P}$ satisfying the conditions described in Theorem~\ref{Theorem: Qureshi condition to have Groebner basis}. Our results offer an algebraic framework for computing and understanding these combinatorial quantities for broad classes of convex polyominoes, including directed convex polyominoes, see Figure~\ref{fig:Ferrer diagram and stack}. In fact, we prove our main result, Theorem~\ref{Thm:main}, in a more general setting by considering convex collections of cells rather than polyominoes alone.

The paper is organized as follows. In Section~\ref{Section: Preliminaries}, we recall the necessary background on collections of cells and their associated ideals of $2$-minors, along with relevant definitions from commutative algebra.

Section~\ref{sec:2} provides a brief overview of the switching rook polynomial and related terminology, and then presents Theorem~\ref{thm: computational thm}, our main computational result. Complete details of the algorithm are given in \cite{N}, where the reader can find: \texttt{SageMath} scripts to generate polyominoes and collections of cells, \texttt{Macaulay2} code to compute the switching rook polynomial, the rook number, and the $h$-polynomial of the coordinate ring, and pre-generated datasets of collections of cells up to rank 10 and polyominoes up to rank 12 suitable for use with the code. Finally, based on these computations, we state Conjecture~\ref{conj}, which significantly extends \cite[Conjecture 3.2]{QRR} to arbitrary collections of cells, and also includes the correspondence between the rook number and the Castelnuovo–Mumford regularity of the coordinate ring.

In Section~\ref{sec:3}, we review the conditions introduced in \cite{Q} under which a collection of cells $\Pc$ admits a quadratic Gr\"obner basis. These are presented in Theorem~\ref{Theorem: Qureshi condition to have Groebner basis} as Condition~($\#$) and Condition~($\#'$).

In Section~\ref{Sec:4}, we prove our main result Theorem~\ref{Thm:main}: if a convex collection of cells $\mathcal{P}$ (in particular, any convex polyomino) satisfies either of Condition~($\#$) or Condition~($\#'$), then the switching rook polynomial of $\Pc$ coincides with the $h$-polynomial of $K[\Pc]$, the coordinate ring associated to $\mathcal{P}$. Moreover, the rook number of $\Pc$ equals the Castelnuovo–Mumford regularity of $K[\mathcal{P}]$. This framework allows rook polynomials and rook numbers to be computed using tools from computational algebra. For example, we demonstrate in Example~\ref{Exa: rook polynomial} how \texttt{Macaulay2}~\cite{M2} can be used to efficiently compute these invariants, providing a practical enumeration in more general settings. The proof of Theorem~\ref{Thm:main} relies on identifying a suitable symmetry of $\mathcal{P}$ that enables a meaningful dissection of the collection, allowing us to apply additive properties of Hilbert–Poincaré series.


\section{Required terminologies related to collection of cells}\label{Section: Preliminaries}

This section is devoted to introducing the definitions and notations related to collections of cells and the associated ideals of $2$-minors.

Let $(i,j), (k,l) \in \mathbb{N}^2$. We define a partial order on $\mathbb{N}^2$ by setting $(i,j) \leq (k,l)$ if and only if $i \leq k$ and $j \leq l$. Given $a = (i,j)$ and $b = (k,l)$ in $\mathbb{N}^2$ with $a \leq b$, we define the set
\[
[a,b] = \{(m,n) \in \mathbb{N}^2 \mid i \leq m \leq k,\ j \leq n \leq l\}
\]
as an \emph{interval} in $\mathbb{N}^2$. If $i < k$ and $j < l$, then $[a,b]$ is called a \emph{proper} interval. In this case, we refer to $a$ and $b$ as the \emph{diagonal corners} of $[a,b]$, and define $c = (i,l)$ and $d = (k,j)$ as the \emph{anti-diagonal corners}. If $j = l$ (respectively, $i = k$), then $a$ and $b$ are said to be in a \emph{horizontal} (respectively, \emph{vertical}) position.

A proper interval $C = [a,b]$ with $b = a + (1,1)$ is called a \emph{cell} of $\mathbb{N}^2$. The points $a$, $b$, $c$, and $d$ are referred to as the \emph{lower-left}, \emph{upper-right}, \emph{upper-left}, and \emph{lower-right} corners of $C$, respectively. We denote the set of \emph{vertices} and \emph{edges} of $C$ by $V(C) = \{a, b, c, d\}, \quad E(C) = \{\{a,c\}, \{c,b\}, \{b,d\}, \{a,d\}\}.$ For a collection of cells $\cP$ in $\mathbb{N}^2$, the sets of vertices and edges are defined as $V(\cP) = \bigcup_{C \in \cP} V(C), \quad E(\cP) = \bigcup_{C \in \cP} E(C).$ The \emph{rank} of $\cP$, denoted by $|\cP|$, is the number of cells in $\cP$. By convention, the empty set is considered a collection of cells of rank $0$.

Consider two cells $A$ and $B$ in $\mathbb{N}^2$, with lower-left corners $a = (i,j)$ and $b = (k,l)$, respectively, and suppose that $a \leq b$. The \emph{cell interval} $[A,B]$, also referred to as a \emph{rectangle}, is the set of all cells in $\mathbb{N}^2$ whose lower-left corners $(r,s)$ satisfy $i \leq r \leq k$ and $j \leq s \leq l$.

Let $\Pc$ be a collection of cells. The cell interval $[A,B]$ is called the \emph{minimal bounding rectangle} of $\Pc$ if $\Pc \subseteq [A,B]$ and there exists no other rectangle in $\mathbb{N}^2$ that properly contains $\Pc$ and is properly contained in $[A,B]$ (see Figure~\ref{fig:Ferrer diagram and stack}). If the corners $(i,j)$ and $(k,l)$ are in horizontal (respectively, vertical) position, we say that the cells $A$ and $B$ are in \emph{horizontal} (respectively, \emph{vertical}) position. The cell interval $[A,B]$ is called a \textit{column} (respectively, a \textit{row}) of $\cP$ if the cells $A$ and $B$ are in vertical (respectively, horizontal) position, all the cells in $[A,B]$ belong to $\cP$, and there is no cell interval $[A',B']$ with $A'$ and $B'$ in vertical (respectively, horizontal) position such that $[A,B]\subsetneq [A',B']$. The collection of cells in Figure~\ref{Figure: Polyomino + weakly conn. coll. of cells + three disconnected comp.} (A) has thirteen columns and ten rows.

A finite collection of cells $\Pc$ is said to be \emph{weakly connected} if, for any two cells $C$ and $D$ in $\Pc$, there exists a sequence of cells $\mathcal{C} \colon C = C_1, \dots, C_m = D$ in $\Pc$ such that $V(C_i) \cap V(C_{i+1}) \neq \emptyset$ for all $i = 1, \dots, m-1$. For an illustration, see Figure~\ref{Figure: Polyomino + weakly conn. coll. of cells + three disconnected comp.} (B).

If $\Pc = \bigcup_{i=1}^s \Pc_i$, where each $\Pc_i$ is a weakly connected collection of cells and $V(\Pc_i) \cap V(\Pc_j) = \emptyset$ for all $i \neq j$, then $\Pc_1, \dots, \Pc_s$ are called the \emph{weakly connected components} of $\Pc$. The collection of cells in Figure~\ref{Figure: Polyomino + weakly conn. coll. of cells + three disconnected comp.} (C) has three weakly connected components.

A finite collection of cells $\Pc$ is called \emph{connected}, or simply a \emph{polyomino}, if for any two cells $C$ and $D$ in $\Pc$, there exists a sequence of cells $\mathcal{C} \colon C = C_1, \dots, C_m = D$ in $\Pc$ such that $C_i \cap C_{i+1}$ is an edge shared by both $C_i$ and $C_{i+1}$ for all $i = 1, \dots, m-1$. Such a sequence is called a \emph{path} from $C$ to $D$ in $\Pc$. An example of a polyomino is shown in Figure~\ref{Figure: Polyomino + weakly conn. coll. of cells + three disconnected comp.} (A).

A subcollection $\Pc' \subseteq \Pc$ is called a \emph{connected component} of $\Pc$ if $\Pc'$ is a polyomino and is maximal with respect to set inclusion; that is, for any $A \in \Pc \setminus \Pc'$, the union $\Pc' \cup \{A\}$ is not a polyomino. For instance, the collection of cells in Figure~\ref{Figure: Polyomino + weakly conn. coll. of cells + three disconnected comp.} (B) has two connected components $\Pc_1$ and $\Pc_2$.

\begin{figure}[h]
    \centering
    \subfloat[]{\includegraphics[scale=0.5]{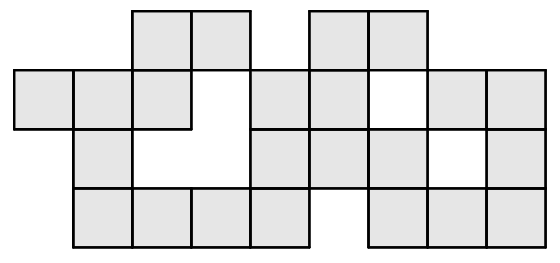}}
    \subfloat[]{\includegraphics[scale=0.5]{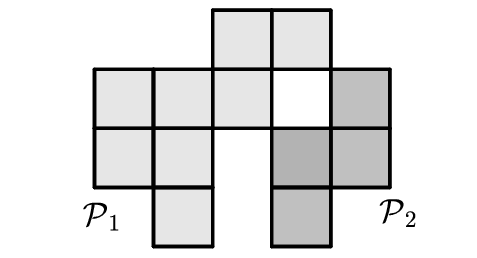}}
    \subfloat[]{\includegraphics[scale=0.5]{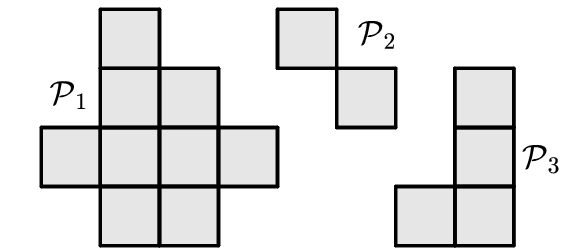}}
    \caption{A polyomino, a weakly connected collection of cells with two connected components, and a collection of cells with three weakly connected components.}
    \label{Figure: Polyomino + weakly conn. coll. of cells + three disconnected comp.}
\end{figure}

A collection of cells $\Pc$ is called \emph{row convex} (respectively, \emph{column convex}) if, for any two cells $A$ and $B$ of $\Pc$  in horizontal (respectively, vertical) position, the cell interval $[A, B]$ is entirely contained in $\Pc$. If $\Pc$ is both row and column convex, it is called \emph{convex}. In Figure~\ref{Figure: Polyomino + weakly conn. coll. of cells + three disconnected comp.} (C), the collection $\Pc = \Pc_1 \cup \Pc_2 \cup \Pc_3$ is column convex but not row convex. Moreover, each weakly connected component of $\Pc$ is convex.

A collection of cells $\Pc$ is said to be \emph{simple} if, for any two cells $C$ and $D$ not in $\Pc$, there exists a path of cells in the complement of $\Pc$ connecting $C$ to $D$. Roughly speaking, simple polyominoes are the polyominoes without holes. The collections in Figures~\ref{Figure: Polyomino + weakly conn. coll. of cells + three disconnected comp.} (A) and (B) are not simple, while the one in (C) is.

Let $\Pc$ be a convex polyomino with minimal bounding rectangle $[A,B]$. Then:
\begin{enumerate}
    \item $\Pc$ is called a \emph{Ferrer diagram} if at least three corner cells of $[A,B]$ are in $\Pc$ (Figure~\ref{fig:Ferrer diagram and stack} (A)).
    \item $\Pc$ is called a \emph{stack} if two adjacent corner cells of $[A,B]$ belong to $\Pc$ (Figure~\ref{fig:Ferrer diagram and stack} (B)).
    \item $\Pc$ is called a \emph{parallelogram} if two opposite corner cells of $[A,B]$ belong to $\Pc$ (Figure~\ref{fig:Ferrer diagram and stack} (C)).
    \item $\Pc$ is called \emph{directed convex} if at least one corner cell of $[A,B]$ belongs to $\Pc$ (Figure~\ref{fig:Ferrer diagram and stack} (D)).
\end{enumerate}

It follows directly from the above definitions that every Ferrer diagram, stack polyomino, and parallelogram polyomino is a directed convex polyomino.
\begin{figure}[h]
    \centering
    \subfloat[]{\includegraphics[scale=0.4]{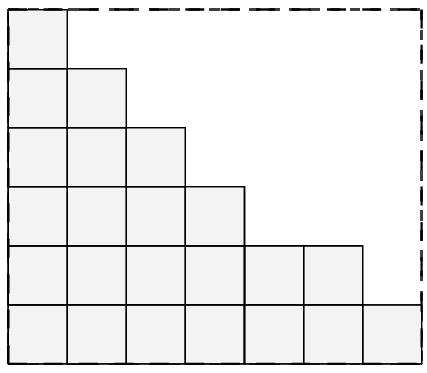}}\quad
    \subfloat[]{\includegraphics[scale=0.4]{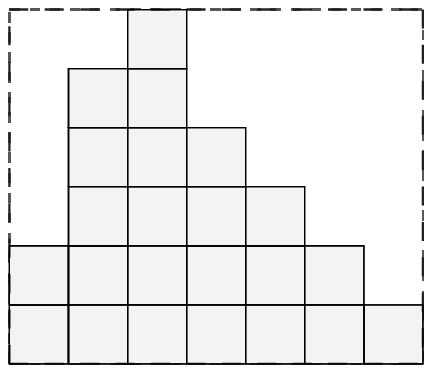}}\quad
    \subfloat[]{\includegraphics[scale=0.4]{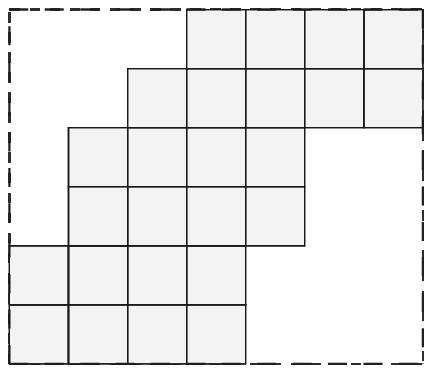}}\quad
    \subfloat[]{\includegraphics[scale=0.4]{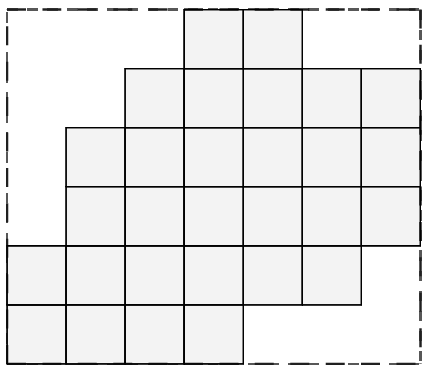}}
    \caption{From left to right: a Ferrer diagram, a stack, a parallelogram and a directed convex.}
    \label{fig:Ferrer diagram and stack}
\end{figure}


\subsection{Ideals of 2-minors Associated to Collections of Cells}

Let $\Pc$ be a collection of cells, and let $S_\Pc = K[x_v : v \in V(\Pc)]$ be the polynomial ring associated to $\Pc$, where $K$ is a field. A proper interval $[a,b]$ is called an \emph{inner interval} of $\Pc$ if all cells in $\Pc_{[a,b]}$ belong to $\Pc$. If $[a,b]$ is an inner interval of $\Pc$, with diagonal corners $a$ and $b$ and anti-diagonal corners $c$ and $d$, then the binomial $x_a x_b - x_c x_d$ is called an \emph{inner 2-minor} of $\Pc$. The ideal $I_{\Pc} \subset S_\Pc$ generated by all inner 2-minors of $\Pc$ is called the \emph{ideal of 2-minors} of $\Pc$. If $\Pc$ is a polyomino, then $I_{\Pc}$ is referred to as the \emph{polyomino ideal} of $\Pc$. The quotient ring $K[\Pc] = S_\Pc / I_{\Pc}$ is called the \emph{coordinate ring} of $\Pc$.

\begin{Remark}\label{rem:p=q}
Let $\Pc$ be a collection of cells, and let $\Qc$ be the collection obtained from $\Pc$ by a symmetry of the plane—i.e., by a translation, rotation, reflection, or glide reflection. Then $I_\Pc$ and $I_\Qc$ define the same ideal up to a relabeling of variables; in particular, $K[\Pc] \cong K[\Qc]$.
\end{Remark}

We conclude this subsection by introducing some notions and results related to the Hilbert-Poincaré series of a graded $K$-algebra $R/I$. Let $R$ be a graded $K$-algebra and $I$ a homogeneous ideal of $R$. Then $R/I$ inherits a natural grading given by $R/I = \bigoplus_{k \in \mathbb{N}} (R/I)_k.$ The function $\mathrm{H}_{R/I} \colon \mathbb{N} \to \mathbb{N}$ defined by $\mathrm{H}_{R/I}(k) = \dim_K (R/I)_k$ is called the \emph{Hilbert function} of $R/I$. The associated formal power series
\[
\mathrm{HP}_{R/I}(t) = \sum_{k \in \mathbb{N}} \mathrm{H}_{R/I}(k) t^k
\]
is called the \emph{Hilbert-Poincaré series} of $R/I$. By the Hilbert–Serre theorem, there exists a unique polynomial $h(t) \in \mathbb{Z}[t]$, called the \emph{$h$-polynomial} of $R/I$, such that $h(1) \neq 0$ and
\[
\mathrm{HP}_{R/I}(t) = \frac{h(t)}{(1 - t)^d},
\]
where $d$ is the Krull dimension of $R/I$. Moreover, if $R/I$ is Cohen–Macaulay, then the Castelnuovo–Mumford regularity satisfies $\mathrm{reg}(R/I) = \deg h(t)$. If $\Pc$ is a collection of cells, we denote the $h$-polynomial of $K[\Pc]$ by $h_{K[\Pc]}(t)$ throughout this paper.

\section{Switching rook polynomial of a collection of cells and a computational approach}\label{sec:2}

This section is devoted to a substantial extension of the algorithmic and computational framework introduced in \cite[Section 3]{QRR}, generalizing it to arbitrary collections of cells. We begin by recalling the basic definitions and notation regarding the classical combinatorics of rook placements on chessboard configurations. These concepts will be essential for interpreting algebraic invariants in terms of rook-theoretic quantities.

 Let $\cP$ be a collection of cells. Two rooks, $R_1$ and $R_2$, are said to be in \textit{attacking position} (or are called \textit{attacking rooks}) in $\cP$ if they are placed on cells belonging to the same row or the same column of $\cP$. 
 On the other hand, two rooks are in \textit{non-attacking position} (or, they are \textit{non-attacking rooks}) in $\cP$ if they are not in attacking position. A \textit{$j$-rook configuration} in $\cP$ is a placement of $j$ rooks arranged in non-attacking positions in $\cP$, where $j> 0$; for convention, the $0$-rook configuration is $\emptyset$. Figure~\ref{Figura:example rook configuration} illustrates two examples of $5$-rook configurations.

\begin{figure}[h]
	\centering
    \subfloat[]{\includegraphics[scale=0.6]{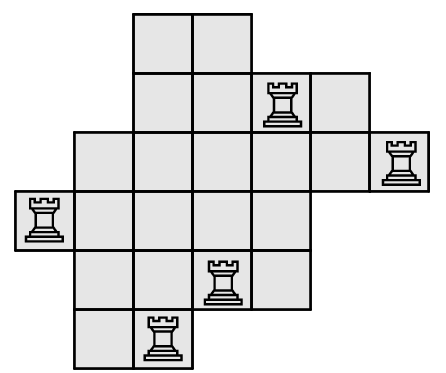}}\qquad\qquad
	\subfloat[]{\includegraphics[scale=0.6]{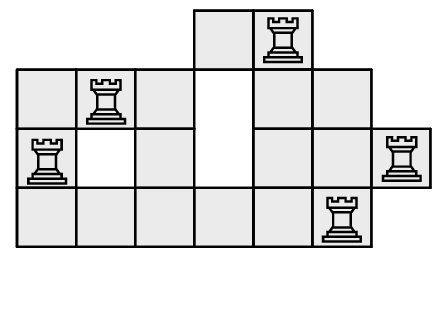}}
	\caption{Two examples of $5$-rook configurations in $\cP$.}
	\label{Figura:example rook configuration}
\end{figure}
 
 The \textit{rook number} $r(\cP)$ is the maximum number of non-attacking rooks that can be placed in $\cP$. For each $j \in [r(\mathcal{P})]$, we denote by $\mathcal{R}_j(\mathcal{P})$ the set of all $j$-rook configurations in $\mathcal{P}$, and by $r_j(\mathcal{P})$ the number of such configurations. By convention, we set $\mathcal{R}_0(\mathcal{P}) = \emptyset$ and $r_0(\mathcal{P}) := 1$. Moreover, let $\cR(\cP)=\cup_{k=0}^{r(\cP)}\cR_k(\cP)$. The \textit{rook polynomial} of $\cP$ is defined as 
\[
r_\cP(t) = \sum_{j=0}^{r(\cP)} r_j(\cP) t^j.
\]

Two non-attacking rooks in $\cP$ are said to be in \textit{switching position}, or they are called \textit{switching rooks}, if they are placed in the diagonal (resp. anti-diagonal) cells of $I$, where $I$ is an inner interval of $\cP$. In such a case, we say the rooks are in diagonal (resp. anti-diagonal) position.

Fix $j \in \{0, \dots, r(\cP)\}$. Let $F \in \cR_j(\cP)$ and let $R_1$ and $R_2$ be two switching rooks of $F$ in diagonal (resp. anti-diagonal) position in $I$, where $I$ is an inner interval of $\cP$. Let $R_1'$ and $R_2'$ be the rooks in the corresponding anti-diagonal (resp. diagonal) cells of $I$. Then the set 
\[
(F \setminus \{R_1, R_2\}) \cup \{R_1', R_2'\}
\]
is also an element of $\cR_j(\cP)$. This operation is called a \textit{switch of $R_1$ and $R_2$}. 
This induces an equivalence relation $\sim$ on $\cR_j(\cP)$: for $F_1, F_2 \in \cR_j(\cP)$, we write $F_1 \sim F_2$ if $F_2$ can be obtained from $F_1$ through a sequence of switches. Let $\tilde{\cR}_j(\cP) = \cR_j(\cP)/\sim$ be the set of equivalence classes. We set $\tilde{r}_j(\cP) = |\tilde{\cR}_j(\cP)|$ for all $j \in [r(\cP)]$, and conventionally $\tilde{r}_0(\cP) = 1$.

The \textit{switching rook polynomial} of $\cP$ is the polynomial in $\mathbb{Z}[t]$ defined by
\[
\tilde{r}_{\cP}(t) = \sum_{j=0}^{r(\cP)} \tilde{r}_j(\cP) t^j.
\]

For instance, the rook polynomial of the polyomino $\cP$ in Figure~\ref{Figura:example rook configuration}(B) is $r_{\cP}(t) = 1 + 18 t + 115 t^2 + 337 t^3 + 484 t^4 + 339 t^5 + 108 t^6 + 12 t^7,$ and its switching rook polynomial is $\tilde{r}_{\cP}(t) = 1 + 18 t + 112 t^2 + 314 t^3 + 428 t^4 + 283 t^5 + 85 t^6 + 9 t^7.$

A collection of cells is called \textit{thin} if it does not contain any square tetromino. A \emph{square tetromino} is a $2 \times 2$ block of adjacent cells forming a square. Observe that if $\cP$ is a thin collection of cells, then $\tilde{r}_{\cP}(t) = r_{\cP}(t)$.

\begin{Remark}\label{rem:tensorprod}
Let $\cP$ be a collection of cells with weakly connected components $\cP_1, \cP_2, \ldots, \cP_s$. Then, we have $\tilde{r}_{\cP}(t) = \tilde{r}_{\cP_1}(t) \cdots \tilde{r}_{\cP_s}(t).$
Moreover, since $K[\cP] \cong K[\cP_1] \otimes \cdots \otimes K[\cP_s]$, we obtain $\rHP_{K[\cP]}(t) = \rHP_{K[\cP_1]}(t) \cdots \rHP_{K[\cP_s]}(t).$
\end{Remark}
Determining the rook number and rook polynomial of a polyomino, essentially a pruned chessboard, remains a highly challenging combinatorial problem. Investigating the connection with the associated polyomino ideal, along with the interplay between classical and switching rook polynomials, may offer a promising approach to the rook placement problem. 

Consider a rectangular polyomino, which can be viewed as a chessboard $\mathcal{B}_{m,n}$ of size $m \times n$. Its rook polynomial is well known and given by
\[
r_{\mathcal{B}_{m,n}}(t) = \sum_{k=0}^{n} \binom{m}{k} P(n,k) t^k,
\]
where $P(n,k) = n(n-1) \cdots (n-k+1)$. Moreover, an elegant relationship between the rook polynomial and the switching rook polynomial of a board is given in the following proposition.

\begin{Proposition}\label{prop:rook}
Let $\mathcal{B}_{m,n}$ be a rectangular polyomino of width $m$ and height $n$. Then, for every $0 \leq k \leq \min\{m,n\}$, we have
\[
r_k(\mathcal{B}_{m,n}) = k! \cdot \tilde{r}_k(\mathcal{B}_{m,n}).
\]
\end{Proposition}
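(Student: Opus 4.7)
The guiding idea is that in a full rectangle every proper interval is an inner interval, so every pair of non-attacking rooks is automatically in switching position, and the switch operation is always available. This will let me identify the equivalence classes with combinatorial data that is much coarser than the configurations themselves.

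\emph{Step 1: Parametrize $k$-rook configurations.} Any $F\in\mathcal{R}_k(\mathcal{B}_{m,n})$ occupies $k$ distinct rows and $k$ distinct columns (since rooks are non-attacking), so $F$ is equivalent to the data of a row set $R\subseteq[m]$ with $|R|=k$, a column set $C\subseteq[n]$ with $|C|=k$, and a bijection $\sigma_F\colon R\to C$ whose graph is $F$. This yields the known count $r_k(\mathcal{B}_{m,n})=\binom{m}{k}\binom{n}{k}\,k!$, matching $\binom{m}{k}P(n,k)$.

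\emph{Step 2: Switches preserve the row/column sets.} If two rooks at $(r_1,c_1)$ and $(r_2,c_2)$ with $r_1\neq r_2$, $c_1\neq c_2$ are switched, they move to $(r_1,c_2)$ and $(r_2,c_1)$. The occupied row set $\{r_1,r_2\}$ and column set $\{c_1,c_2\}$ are unchanged. Hence the assignment $F\mapsto(R_F,C_F)$ is constant on each $\sim$-class, giving a well-defined map $\tilde{\mathcal{R}}_k(\mathcal{B}_{m,n})\to\binom{[m]}{k}\times\binom{[n]}{k}$.

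\emph{Step 3: The map is a bijection.} Surjectivity is clear: any choice of $R,C$ carries at least the identity bijection. For injectivity, fix $(R,C)$ and suppose $F_1,F_2$ both have row set $R$ and column set $C$, corresponding to bijections $\sigma_1,\sigma_2\colon R\to C$. Then $\sigma_2\sigma_1^{-1}$ is a permutation of $C$ and is a product of transpositions. Each transposition $(c,c')$ on $C$ corresponds to swapping the columns of two rooks at $(r,c)$ and $(r',c')$ in the current configuration; since $\mathcal{B}_{m,n}$ is a full rectangle, the cell interval they span is an inner interval, so this swap is a legal switch. Composing the corresponding sequence of switches transforms $F_1$ into $F_2$, proving $F_1\sim F_2$.

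\emph{Step 4: Conclude.} Thus $\tilde{r}_k(\mathcal{B}_{m,n})=\binom{m}{k}\binom{n}{k}$, while $r_k(\mathcal{B}_{m,n})=\binom{m}{k}\binom{n}{k}\,k!$, yielding $r_k(\mathcal{B}_{m,n})=k!\,\tilde{r}_k(\mathcal{B}_{m,n})$. The main subtlety is Step~3: it is essential that every rectangle inside $\mathcal{B}_{m,n}$ is an inner interval, so that the symmetric-group generation argument by transpositions can be executed one switch at a time; this is where the hypothesis that the board is a full rectangle is used in a non-trivial way.
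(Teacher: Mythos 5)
Your proof is correct and follows essentially the same route as the paper's: both identify the configurations supported on a fixed set of $k$ rows and $k$ columns with permutations and conclude that each equivalence class has exactly $k!$ elements. You make explicit two points the paper leaves implicit, namely that switches preserve the occupied row and column sets, and that transitivity within a class follows from generating the symmetric group by transpositions, each realized by a legal switch because every proper rectangle inside $\mathcal{B}_{m,n}$ is an inner interval.
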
 

\begin{proof}
Let $k \in \{1, \dots, \min\{m,n\}\}$, and let $[\mathcal{T}] \in \tilde{\mathcal{R}}_k(\mathcal{B}_{m,n})$. Denote by $R_1, \dots, R_k$ and $C_1, \dots, C_k$ the $k$ rows and columns of $\mathcal{B}_{m,n}$ on which the rooks of $\mathcal{R}$ are placed. 

Set $\mathcal{S} = (\cup_{i \in [k]} R_i) \cap (\cup_{j \in [k]} C_j)$, which can be viewed as a $k \times k$ square. Label the rows and columns of $\mathcal{S}$ increasingly from $1$ to $k$, left to right for columns and bottom to top for rows. Each cell is then identified by a pair $(i,j)$ in $[k] \times [k]$, representing its row and column, respectively.

In this setup, each configuration of $k$ non-attacking rooks corresponds to a permutation $\sigma: [k] \to [k]$, and vice versa. We identify the identity permutation $\mathrm{id}(i) = i$ with the $k$-rook configuration $\mathcal{T}$. Since the equivalence class $[\mathcal{T}]$ contains exactly $k!$ elements, thus
\[
r_k(\mathcal{B}_{m,n}) = \sum_{[\mathcal{T}] \in \tilde{\mathcal{R}}_k(\mathcal{B}_{m,n})} |[\mathcal{T}]| = \tilde{r}_k(\mathcal{B}_{m,n}) \cdot k!.
\]
\end{proof}

For example, for the board $\mathcal{B}_{3,4}$, we have
\[
r(\mathcal{B}_{3,4})(t) = 1 + 12t + 36t^2 + 24t^3 \quad \text{and} \quad \tilde{r}(\mathcal{B}_{3,4})(t) = 1 + 12t + 18t^2 + 4t^3.
\]

It follows from the proof of Proposition~\ref{prop:rook} that for any weakly connected collection of cells $\mathcal{P}$, we have
\[
\tilde{r}_i(\mathcal{P}) \leq r_i(\mathcal{P}) \leq i! \cdot \tilde{r}_i(\mathcal{P}),
\]
for every $0 \leq i \leq r(\mathcal{P})$.

We now describe the computational approach used to compute the switching rook polynomial and the rook number of a collection of cells, which underlies the following result.

\begin{Theorem}\label{thm: computational thm}
Let $\cP$ be a collection of cells. Then $h_{K[\cP]}(t)$ coincides with the switching rook polynomial of $\cP$, and $\mathrm{reg}(K[\cP])$ equals the rook number of $\cP$ in the following cases:
\begin{itemize}
    \item when $\cP$ is a collection of cells of rank at most $10$;
    \item when $\cP$ is a polyomino of rank at most $12$.
\end{itemize}
\end{Theorem}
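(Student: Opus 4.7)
The plan is to verify the claimed equalities by exhaustive computer-algebra enumeration, rather than by a structural mathematical argument: the statement is a finite-range computational claim, so the proof amounts to describing an algorithm that terminates and comparing its two outputs on every instance. I would organize the work in three stages: (i) systematic enumeration of all relevant collections of cells modulo plane symmetry, (ii) algorithmic computation of $\tilde{r}_{\cP}(t)$ and $r(\cP)$ combinatorially, and (iii) computation of $h_{K[\cP]}(t)$ and $\mathrm{reg}(K[\cP])$ via Macaulay2, followed by a term-by-term comparison for each $\cP$.

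For stage (i), I would generate collections of cells cell-by-cell on a bounded grid. Two reductions drastically shrink the search space. First, by Remark~\ref{rem:p=q}, $K[\cP]\cong K[\cQ]$ whenever $\cP$ and $\cQ$ are related by a planar isometry, and $\tilde{r}_{\cP}(t)$ is likewise invariant under such isometries; therefore only one representative of each congruence class needs to be tested, which I would enforce by canonical-form hashing during generation. Second, by Remark~\ref{rem:tensorprod}, both sides of the claimed identity are multiplicative across weakly connected components; hence it suffices to verify the equalities on weakly connected collections and then extend to the disconnected case by the product formula. Polyominoes up to rank $12$ can be generated using a Redelmeier-style algorithm specialized to free polyominoes, and general weakly connected collections up to rank $10$ can be obtained by an analogous vertex-sharing expansion.

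For stage (ii), given $\cP$, I would enumerate $\cR_j(\cP)$ for each $j$ by backtracking over rows (placing at most one rook per row and column of $\cP$), form the graph on $\cR_j(\cP)$ whose edges encode single switches across inner intervals of $\cP$, and compute $\tilde{r}_j(\cP)$ as the number of connected components via union–find. The rook number $r(\cP)$ is read off as the largest $j$ with $r_j(\cP)\neq 0$, and the coefficients assemble into $\tilde{r}_{\cP}(t)$. For stage (iii), I would pass $I_{\cP}$, presented by its explicit generating set of inner $2$-minors, to Macaulay2, compute $\rHP_{K[\cP]}(t)$ via its Hilbert-series routine, and extract $h_{K[\cP]}(t)$ in lowest terms; the degree of $h_{K[\cP]}(t)$ equals $\mathrm{reg}(K[\cP])$ in the Cohen–Macaulay cases and can otherwise be compared to $r(\cP)$ directly from the tabulated data. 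The verification then consists in checking, for every enumerated $\cP$, that $h_{K[\cP]}(t)=\tilde{r}_{\cP}(t)$ coefficient-wise.

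The principal obstacle is computational cost rather than mathematical subtlety. The count of weakly connected collections grows rapidly with rank, and for each large or tall collection the Gröbner-basis step underlying the Hilbert-series computation is the bottleneck. I would mitigate this by (a) parallelizing over independent instances on an HPC cluster, (b) using the multiplicative decomposition of Remark~\ref{rem:tensorprod} to cache results on weakly connected pieces and compose them, (c) caching canonical forms so congruent collections are never recomputed, and (d) choosing a term order compatible with the $2$-minor structure whenever a quadratic Gröbner basis is known to exist, which sharply reduces the Macaulay2 runtime. The full implementation, the generated datasets, and the output logs are documented in \cite{N}, so the content of the proof is to certify that these pipelines were run on every collection in the stated ranges and that the two polynomials matched in every instance.
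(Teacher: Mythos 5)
Your proposal is correct and follows essentially the same route as the paper: exhaustive enumeration of collections up to symmetry, computation of the switching rook polynomial by building the graph of single switches on each $\cR_j(\cP)$ and counting connected components, computation of $h_{K[\cP]}(t)$ and $\mathrm{reg}(K[\cP])$ in \texttt{Macaulay2}, and a coefficient-wise comparison, with the heavy cases offloaded to an HPC cluster. The only differences are implementation details (union--find versus the \texttt{Graphs} package, caching and term-order heuristics), which do not change the argument.
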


 \begin{proof}
To establish this result, we implemented some routines in \texttt{SageMath} \cite{sage} and \texttt{Macaulay2} \cite{M2}, shared in \cite{N}, which, for a fixed rank $n$, performs the following steps:

1) Compute the set $L$ of all collections of cells of rank $n$ (using \texttt{SageMath}). The corresponding datasets are provided as plain-text files containing all collections of cells and polyominoes that we tested. Tables \ref{tab:collection} and \ref{tab:polyominoes} summarize the number of weakly connected collections of cells and polyominoes, respectively.

\begin{table}[h!]
\centering
\begin{tabular}{|>{\centering\arraybackslash}p{6cm}|c|c|c|c|c|c|c|c|c|}
\hline
Rank & 2 & 3 & 4 & 5 & 6 & 7 & 8 & 9 & 10\\
\hline
\parbox[c]{6cm}{\centering\vspace{1mm} Number weakly connected\\collections of cells\vspace{1mm}} & 2 & 5 & 22 & 94 & 524 & 3031 & 18770  & 118133 & 758381 \\
\hline
\end{tabular}
\caption{Number of weakly connected collections of cells up to symmetries.}
\label{tab:collection}
\end{table}

\begin{table}[h!]
\centering
\begin{tabular}{|c|c|c|c|c|c|c|c|c|c|c|c|}
\hline
Rank & 2 & 3 & 4 & 5 & 6 & 7 & 8 & 9 & 10 & 11 &12  \\
\hline
Number of polyominoes & 1 & 2 & 5 & 12 & 35 & 108 & 369 & 1285 & 4655 & 17073 & 63600 \\
\hline
\end{tabular}
\caption{Number of polyominoes up to symmetries.}
\label{tab:polyominoes}
\end{table}

2) We present an algorithm, implemented in \texttt{Macaulay2}, for computing the switching rook polynomial of a collection of cells (see Section~2, file \texttt{RookPol.m2} in \cite{N}). A collection is encoded as a list of lists, where each cell is represented by the pair of lists corresponding to its diagonal corners. For instance, the square tetromino is encoded as
\[
\texttt{\{\{\{1,1\},\{2,2\}\},\{\{2,1\},\{3,2\}\},\{\{1,2\},\{2,3\}\},\{\{2,2\},\{3,3\}\}\}}.
\]

\medskip
Let $Q$ be a collection of cells. The first step consists in computing the set of all non-attacking rook configurations in $Q$ (see Algorithm \ref{alg:allconf}); here we need the auxiliary function \texttt{isNonAttackingRooks(A,B,Q)}, which returns \texttt{true} if two cells $A,B\in\mathbb{Z}^2$, where two rooks are placed, are in horizontal or vertical position in $\mathbb{Z}^2$ and, in addition, belong to an inner interval of $Q$.  

\begin{algorithm}[h]
\DontPrintSemicolon
\KwData{A collection of cells $Q$.}
\KwResult{All non-attacking rook configurations on $Q$, grouped by cardinality.}

Sort $Q$.\;
Initialize $\texttt{conf} = \{\{c\}\mid c\in Q\}$ (all $1$-rook configurations).\;
Initialize $\texttt{AllConf} = \{\texttt{conf}\}$.\;

\While{true}{
    $\texttt{out}=\emptyset$.\;
    \For{$N\in$ \textnormal{\texttt{conf}}}{
        \For{$c\in Q$}{
            \If{\textnormal{\texttt{all(N, n-> isNonAttackingRooks(c, n, Q)):}} $c$ is in non-attacking position with any rooks in $N$}{
                $FF=\texttt{sort}(N\cup\{c\})$.\;
                $\texttt{out}=\texttt{out}\cup\{FF\}$.\;
            }
        }
    }
    Remove duplicates: $\texttt{out}=\texttt{set}(\texttt{out})$.\;
    \If{$\# \textnormal{\texttt{out}}=0$}{\Return $\texttt{AllConf}$.\;}
    Append level: $\texttt{AllConf}=\texttt{AllConf}\cup\{\texttt{out}\}$.\;
    Update current layer: $\texttt{conf}=\texttt{out}$.\;
}
\caption{Computation of all non-attacking rook configurations on $Q$}
\label{alg:allconf}
\end{algorithm}

The next step, where we compute the coefficients of the switching rook polynomial (see Algorithm \ref{alg:switching}), requires the \texttt{Macaulay2} package \texttt{Graphs} and two following two short auxiliary functions.  
\begin{itemize}
    \item \texttt{SwitchOperation(A,B,Q)} takes two rooks $A,B\in Q$ and checks whether they are in switching position; if so, it returns the switched configuration;  
    \item By using the previous one, \texttt{areSwitchEquivalent(R1,R2,Q)} tests whether two rook configurations $R_1$ and $R_2$ can be obtained from one another by a single switch operation.
\end{itemize}

\begin{algorithm}[h]
\DontPrintSemicolon
\KwData{A collection of cells $Q$.}
\KwResult{The coefficients list $\{1,c_1,c_2,\ldots,c_r\}$ of the switching rook polynomial of $Q$.}

$\textnormal{\texttt{RookConf}} \gets \texttt{AllNonAttackingRookConfigurations}(Q)$\; $\textnormal{\texttt{RookNumber}} \gets |\textnormal{\texttt{RookConf}}|$ \;
Initialize $\texttt{coeffRookPol} =\{1\}$\;
\For{$k \in \{0,\ldots,\textnormal{\texttt{RookNumber}}-1\}$}{
    $\textnormal{\texttt{kRookSet}} \gets \textnormal{\texttt{RookConf}}\# k$ \;
    Build a graph $G_k$ with:
    \begin{enumerate}
       \item vertices = elements of $\textnormal{\texttt{kRookSet}}$,
       \item edges between $R_1,R_2$ if \texttt{areSwitchEquivalent(R1,R2,Q)==true}.
    \end{enumerate}
    \texttt{kCoeff}$\gets$ number of connected components of $G_k$\;
    $\texttt{coeffRookPol} = \texttt{coeffRookPol} \cup \{\texttt{kCoeff}\}$\;
}
\Return $\texttt{coeffRookPol}$\;

\caption{Algorithm for computing the switching rook polynomial of $Q$}
\label{alg:switching}
\end{algorithm}

 3) Let $L$ be the list of all collections of cells of rank $n$. We finally provide the function \texttt{TestConj(L)} to test the conjectures for all collections of cells in $L$. Specifically, for each collection $Q \in L$:  
\begin{itemize}
    \item compute the inner $2$-minor ideal of $Q$ (see Section 1, file \texttt{RookPol.m2} in \cite{N});
    \item compute the $h$-polynomial and the regularity of $K[Q]$;  
    \item compute the switching rook polynomial and the rook number of $Q$;  
    \item compare the two polynomials and check whether $\mathrm{reg}(K[Q])$ equal the rook number of $Q$.  
\end{itemize}
\item The output is “Conjecture is ok” if both equalities hold; otherwise return counterexamples.  

In all tested cases, that is, $n=2,\dots,10$ for collections of cells and $n= 11, 12$ for polyominoes, the program confirmed the conjectured equalities. We point out that the cases $n = 9, 10$ for collections of cells and $n = 12$ for polyominoes were verified using the HPC cluster Tosun provided by Sabanci University, due to their large computational complexity, which made it impossible to verify them on a standard machine.
\end{proof}

The previous result leads us to formulate the following.

\begin{Conjecture}\label{conj}
Let $\cP$ be a collection of cells. Then the switching rook polynomial of $\cP$ coincides with the $h$-polynomial of $K[\cP]$, and the rook number of $\cP$ equals the regularity of $K[\cP]$.
\end{Conjecture}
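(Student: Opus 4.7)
Since the statement is a conjecture, my plan is to propose a two-stage strategy: first a reduction to the most combinatorially tractable setting, then a decomposition-and-induction scheme mirroring the dissection philosophy announced for Theorem~\ref{Thm:main}. By Remark~\ref{rem:tensorprod}, both invariants in question are multiplicative over weakly connected components, so it suffices to establish the equalities for weakly connected $\cP$. Within such a component one may further try to peel off the connected (polyomino) components, although this step is delicate because two polyominoes can be joined at a single vertex without sharing an edge, producing inner intervals that straddle the decomposition.

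Next, I would induct on the rank $|\cP|$. For the inductive step the idea is to select a distinguished cell $C \in \cP$ — ideally one of the four corner cells of the minimal bounding rectangle, or more generally a cell that is ``exposed'' in the sense that removing it produces another legitimate collection $\cP'$ for which relatively few inner intervals are lost. On the combinatorial side, $k$-rook configurations in $\cP$ split into those avoiding $C$ (counted by $\tilde r_k(\cP')$) and those placing a rook on $C$ (counted by $\tilde r_{k-1}$ of the sub-board obtained by deleting the row and column through $C$), and one must verify that switching equivalence respects this split. The goal would then be to identify a short exact sequence of graded $K[\cP]$-modules whose end terms realise this very combinatorial decomposition, so that the additivity of Hilbert--Poincar\'e series produces the matching recursion on $h$-polynomials.

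The hardest part, and the reason Theorem~\ref{Thm:main} only covers certain convex cases, is that the passage from $\cP$ to $\cP'$ need not respect the ideal structure cleanly: $I_{\cP'}$ is generally neither a quotient nor a specialisation of $I_\cP$ in a way that yields the required exact sequence. In the presence of a symmetry (as exploited in Theorem~\ref{Thm:main}), one can arrange the dissection so that the pieces are either rectangles — whose Hilbert series are classical and whose switching rook polynomials are controlled via Proposition~\ref{prop:rook} — or strictly smaller convex collections on which Theorem~\ref{Theorem: Qureshi condition to have Groebner basis} supplies the matching algebraic recursion. For the general conjecture, the main obstacles are therefore establishing Cohen--Macaulayness (needed for the regularity equality to follow from $\mathrm{reg}=\deg h$) and handling the non-convex and non-simple configurations, where neither a quadratic Gr\"obner basis nor a symmetric dissection is available.

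A complementary route I would pursue in parallel is to interpret the standard monomials of $K[\cP]$ with respect to some (not necessarily quadratic) diagonal-like Gr\"obner basis as canonical representatives of switching equivalence classes of rook configurations. If one can show that each class of $\tilde{\cR}_k(\cP)$ contains exactly one standard monomial of the appropriate degree, then the identity $h_{K[\cP]}(t)=\tilde r_{\cP}(t)$ would follow from Hilbert's theorem, and the regularity equality would follow in the Cohen--Macaulay case. Establishing such a bijection — matching the purely combinatorial ``switch'' operation with the algebraic straightening procedure on monomials modulo the binomial relations $x_a x_b - x_c x_d$ — is, I expect, the true conceptual core of the conjecture, and the step most resistant to a uniform argument across all collections of cells.
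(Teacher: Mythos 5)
The statement you are asked about is a \emph{conjecture}: the paper itself offers no general proof, only exhaustive computational verification up to rank $10$ (rank $12$ for polyominoes) in Theorem~\ref{thm: computational thm} and a proof for convex collections satisfying Condition~($\#$) or ($\#'$) in Theorem~\ref{Thm:main}. Your proposal is likewise a strategy rather than a proof, and in broad outline it matches the paper's actual argument for the proven cases: reduction to weakly connected components via Remark~\ref{rem:tensorprod}, induction on rank, peeling off a distinguished corner cell, and a short exact sequence whose additivity for Hilbert--Poincar\'e series realises the combinatorial recursion (the paper uses $0 \to S_{\cP}/(\ini_{<_{\rev}}(I_\cP):x_v) \to S_{\cP}/\ini_{<_{\rev}}(I_\cP) \to S_{\cP}/(\ini_{<_{\rev}}(I_\cP),x_v) \to 0$ at the level of the initial ideal, which is why the quadratic Gr\"obner basis hypothesis is needed). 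Your closing remarks about Cohen--Macaulayness being required for $\reg = \deg h$ and about the difficulty of non-convex configurations are also accurate.

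There is, however, one concretely wrong step in your combinatorial split. You assert that the $k$-rook configurations placing a rook on the distinguished cell $C$ are counted, up to switches, by $\tilde r_{k-1}$ of ``the sub-board obtained by deleting the row and column through $C$.'' That is the recursion for the \emph{classical} rook polynomial; it fails for the switching rook polynomial because switching classes straddle your dichotomy: a class can contain both a configuration with a rook on $C$ and one without. The paper's resolution (point (2) in the proof of Theorem~\ref{Thm:main}) is that if a configuration has a rook on the top-left cell $X$ and another rook anywhere in $\cP_v$ (the union of \emph{all} inner intervals containing the corner $v$, a Ferrers-shaped region generally much larger than the row and column of $X$), then the two rooks are in switching position, so the class is already represented among configurations avoiding $X$ and must not be counted again. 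The genuinely new classes are exactly those whose remaining $k-1$ rooks all avoid $\cP_v$; hence the correct second term is $\tilde r_{k-1}(\cP'')$ where $\cP''$ excises all of $\cP_v$ and reglues the leftover pieces $\Lc_{\Pc_v}$, $\overline{\Rc_{\Pc_v}}$ (or $\overline{\Dc_{\Pc_v}}$). With your ``delete the row and column'' board you would overcount, and the resulting polynomial would not match $h_{K[\cP]}(t)$ even for a $2\times 2$ square. Your alternative route via a bijection between standard monomials and switching classes is not pursued in the paper but is a reasonable complementary idea; it too would require the correct notion of which representative of a class is ``standard,'' which is precisely the subtlety above.
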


This extends previous conjectures, which were restricted to simple polyominoes \cite[Conjecture 3.2]{QRR} or non-simple polyominoes (\cite[Conjecture 4.9]{JN}): here the conjecture is verified computationally for collections of cells up rank $10$, including also polyominoes. Although our experiments provide strong evidence for the validity of the conjecture, they do not amount to a general proof. Nevertheless, as we will show in the next section, the validity of Conjecture \ref{conj} could offer an efficient algebraic method to determine the switching rook polynomial and the rook number for arbitrary collections of cells (see Example \ref{Exa: rook polynomial}).

\section{Quadratic Gröbner basis of collections of cells}\label{sec:3}
Let $<$ be a monomial order on $S_\Pc$. For any $f \in S_{\Pc}$ and any ideal $I \subseteq S_{\Pc}$, we denote by $\mathrm{in}_{<}(f)$ the initial monomial of $f$ and by $\mathrm{in}_{<}(I)$ the initial ideal of $I$ with respect to $<$. Throughout this paper, we consider the following total order on the variables of $S_{\Pc}$: For $x_a, x_b \in S_{\Pc}$ with $a = (i,j)$ and $b = (k,\ell)$, we set $x_a > x_b$ if $i > k$, or if $i = k$ and $j > \ell$. 

Let $<_{\rev}$ and $<_{\lex}$ denote the reverse lexicographical and lexicographical monomial orders on $S_{\Pc}$ induced by this total order, respectively. Note that for any inner interval $[p, q]$ of $\Pc$ with anti-diagonal corners $r$ and $s$, we have
\[
\mathrm{in}_{<_{\rev}}(x_p x_q - x_r x_s) = x_r x_s \quad \text{and} \quad \mathrm{in}_{<_{\lex}}(x_p x_q - x_r x_s) = x_p x_q.
\]

We recall the following result from \cite{Q}:

\begin{Theorem}[{\cite[Theorem 4.1, Remark 4.2]{Q}}]\label{Theorem: Qureshi condition to have Groebner basis}
Let $\Pc$ be a collection of cells. The set of generators of $I_\Pc$ forms a reduced (quadratic) Gr\"obner basis with respect to $<_{\rev}$ if and only if the following condition holds:
\begin{enumerate}
    \item[{\em($\#$)}] For any two inner intervals $[b,a]$ and $[d,c]$ of $\Pc$ with anti-diagonal corners $e, f$ and $f, g$, as shown in Figure~\ref{Figura: conditions for Groebner basis}(A), either the pair $b, g$ or the pair $e, c$ forms the anti-diagonal corners of an inner interval of $\Pc$.
\end{enumerate}

Moreover, the set of generators of $I_\Pc$ forms a reduced (quadratic) Gr\"obner basis with respect to $<_{\lex}$ if and only if the following condition holds:
\begin{enumerate}
    \item[{\em($\#'$)}] For any two inner intervals $[g,f]$ and $[f,e]$ of $\Pc$ with anti-diagonal corners $a,b$ and $c,d$, as shown in Figure~\ref{Figura: conditions for Groebner basis}(B), either the pair $b, g$ or the pair $e, c$ forms the \emph{diagonal} corners of an inner interval of $\Pc$.
\end{enumerate}
\begin{figure}[h]
    \centering
    \subfloat[]{\includegraphics[scale=0.7]{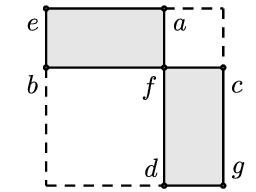}}\qquad
    \subfloat[]{\includegraphics[scale=0.7]{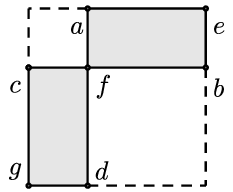}}
    \caption{Conditions for a quadratic Gr\"obner basis with respect to $<_{\rev}$ and $<_{\lex}$.}
    \label{Figura: conditions for Groebner basis}
\end{figure}
\end{Theorem}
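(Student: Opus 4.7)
The plan is to invoke Buchberger's criterion and organize the case analysis by the overlap of leading terms of pairs of inner $2$-minors. Under $<_{\rev}$, the leading monomial of $x_b x_a - x_e x_f$ is the anti-diagonal monomial $x_e x_f$. When the leading monomials of two such generators share no variable, Buchberger's first criterion disposes of the pair; when they coincide, the generators are the same. The substantive case is that of sharing exactly one variable $x_f$, meaning the two inner intervals $[b,a]$ and $[d,c]$ meet exactly at a common anti-diagonal corner $f$---this is precisely the configuration of condition~($\#$). Setting $f_1 = x_b x_a - x_e x_f$ and $f_2 = x_d x_c - x_f x_g$, one computes $S(f_1, f_2) = x_a x_b x_g - x_c x_d x_e$, and a direct check of the coordinate ordering shows the leading monomial under $<_{\rev}$ to be $x_a x_b x_g$.

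For the sufficiency direction, assume condition~($\#$) holds, say with $(b, g)$ being the anti-diagonal corners of an inner interval of $\Pc$ (the subcase for $(e, c)$ is symmetric). The corresponding inner $2$-minor $h = x_{(b_1,d_2)} x_c - x_b x_g$ has leading monomial $x_b x_g$, which divides $x_a x_b x_g$, and a first reduction transforms $S(f_1, f_2)$ into $x_a x_{(b_1,d_2)} x_c - x_c x_d x_e$. The key geometric observation is that the rectangle $[(b_1,d_2), a]$ is itself an inner interval of $\Pc$, because its cells decompose as the union of those of $[b, a]$ and of a subrectangle of the inner interval supplied by condition~($\#$). The associated generator $x_{(b_1,d_2)} x_a - x_d x_e$ then drives a second reduction that zeros out the remainder, so $S(f_1, f_2)$ reduces to zero modulo the generators.

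For the necessity direction, I would argue that failure of condition~($\#$) blocks every possible reduction of the leading monomial $x_a x_b x_g$. The only pairs in $\{a, b, g\}$ lying in anti-diagonal position are $(a, g)$ and $(b, g)$, and a direct geometric comparison shows that $(a, g)$ are the anti-diagonal corners of an inner interval of $\Pc$ if and only if $(e, c)$ are, since both conditions reduce to the same auxiliary rectangle being contained in $\Pc$. Consequently, when condition~($\#$) fails, no generator's leading monomial divides $x_a x_b x_g$, the $S$-polynomial does not reduce to zero, and the generating set fails to be a Gr\"obner basis. The $<_{\lex}$ case proceeds by an entirely parallel argument with the roles of diagonal and anti-diagonal corners exchanged, so that condition~($\#'$) captures the analogous overlap of diagonal leading monomials. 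The principal technical difficulty lies in the geometric bookkeeping---identifying the correct auxiliary rectangles and verifying the equivalences that ensure condition~($\#$) precisely governs which S-polynomials reduce to zero.
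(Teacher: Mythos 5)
First, note that the paper does not prove this statement at all: it is quoted verbatim from \cite[Theorem 4.1, Remark 4.2]{Q}, so there is no internal proof to compare against. Your Buchberger-criterion strategy is the right one, and much of your geometric bookkeeping is correct, but there is a concrete error at the centre of the argument. Writing $b=(b_1,b_2)$, $a=(a_1,a_2)$, $d=(a_1,d_2)$, $c=(c_1,b_2)$ for the configuration of Figure~\ref{Figura: conditions for Groebner basis}(A), the induced variable order is $x_c>x_g>x_a>x_d>x_e>x_b$, and since reverse lexicographic order makes a degree-$3$ monomial containing the \emph{smallest} variable $x_b$ smaller than one avoiding it, the initial term of $S(f_1,f_2)=x_ax_bx_g-x_cx_dx_e$ is $x_cx_dx_e$, not $x_ax_bx_g$. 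Consequently your ``first reduction'' using $x_bx_g\mid x_ax_bx_g$ is not a step of the division algorithm (it rewrites the trailing term), and the two subcases of Condition~($\#$) are \emph{not} symmetric with respect to the term order: when $(e,c)$ span an inner interval the generator $x_ex_c-\dots$ divides the true leading term directly, whereas when only $(b,g)$ span one you must first observe that $[(b_1,d_2),a]$ is then automatically an inner interval (it is the union of $[b,a]$ with a subrectangle of $[(b_1,d_2),c]$), so that $x_ex_d$ divides $x_cx_dx_e$ and the division can proceed --- or, alternatively, argue that your two-step rewriting is a standard representation because $x_ax_bx_g<x_cx_dx_e$. The same misidentification infects the necessity direction: you show $x_ax_bx_g$ is irreducible, but the standard argument requires the \emph{leading} monomial $x_cx_dx_e$ to be irreducible; this is rescued by the equivalences $(a,g)\Leftrightarrow(e,c)$ and $(b,g)\Leftrightarrow(e,d)$ (each pair of conditions amounts to the same auxiliary rectangle lying in $\Pc$), but those must be stated for the pairs inside $\{c,d,e\}$, which you never examine.

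The second gap is in the case analysis. It is not true that two generators whose leading terms share exactly one variable must be in the configuration of Condition~($\#$): the leading term of an inner $2$-minor is the product over the upper-left and lower-right corners, and two distinct inner intervals may share their upper-left corner, or share their lower-right corner. These S-pairs also have non-coprime leading terms, are not covered by Buchberger's first criterion, and must be shown to reduce to zero. They do reduce unconditionally (the relevant auxiliary rectangles are contained in the union or intersection of the two given inner intervals, hence are themselves inner), which is exactly why they do not appear in Condition~($\#$); but the sufficiency direction of the theorem is incomplete until these configurations are checked. With the leading term corrected, the missing configurations verified, and the necessity argument redirected at $x_cx_dx_e$, your outline becomes a complete proof.
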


\begin{Remark}\label{rem:condtion}
    {\em Let $\Pc$ be a collection of cells and $\Qc$ be a reflection of $\Pc$.
    Since the arrangements of intervals described in Figure~\ref{Figura: conditions for Groebner basis}(A) and Figure~\ref{Figura: conditions for Groebner basis}(B) are reflections of each other, it follows that $\Pc$ satisfies Condition~($\#$) if and only if $\Qc$ satisfies Condition~($\#'$).}
\end{Remark}

\begin{Proposition}\label{prop:QbasisDirected}
    Let $\Pc$ be a directed convex polyomino. Then $I_\Pc$ (up to a suitable symmetry of $\Pc$) has a quadratic Gr\"obner basis with respect to $<_{\rev}$. In particular, $I_\Pc$ (up to a suitable symmetry of $\Pc$) has a quadratic Gr\"obner basis with respect to $<_{\rev}$ when $\Pc$ is a parallelogram polyomino, a stack polyomino, or a Ferrers diagram. 
\end{Proposition}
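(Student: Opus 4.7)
The plan is to exploit Theorem~\ref{Theorem: Qureshi condition to have Groebner basis} by producing, via a suitable symmetry of the plane, a polyomino $\mathcal{Q}$ congruent to $\mathcal{P}$ that satisfies Condition~$(\#)$. Since $\mathcal{P}$ is directed convex, at least one corner cell of its minimal bounding rectangle $[A,B]$ belongs to $\mathcal{P}$. By applying the identity, a $180^\circ$ rotation, a horizontal reflection, or a vertical reflection (according to which corner lies in $\mathcal{P}$), one obtains a convex polyomino $\mathcal{Q}$ whose minimal bounding rectangle has the lower-left corner cell in $\mathcal{Q}$; by Remark~\ref{rem:p=q} it suffices to verify Condition~$(\#)$ for $\mathcal{Q}$.

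The structural input is a description of $\mathcal{Q}$ via its column intervals $[\alpha_i, \beta_i]$, where column $i$ of $\mathcal{Q}$ consists of the cells in rows $\alpha_i, \alpha_i + 1, \ldots, \beta_i$. A standard fact about convex polyominoes, which follows from row convexity, column convexity, and the edge-adjacency of consecutive columns, says that $(\alpha_i)$ is first non-increasing and then non-decreasing (a ``valley''), while $(\beta_i)$ is first non-decreasing and then non-increasing (a ``mountain''). The presence of the lower-left corner cell in $\mathcal{Q}$ forces $(\alpha_i)$ to attain its global minimum at the leftmost column; since $(\alpha_i)$ is a valley whose minimum is already at the left endpoint, it must then be non-decreasing across all columns of $\mathcal{Q}$.

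Verification of Condition~$(\#)$ then follows by a direct cell-by-cell check. Let $[b,a]$ and $[d,c]$ be inner intervals of $\mathcal{Q}$ arranged as in Figure~\ref{Figura: conditions for Groebner basis}(A), sharing the anti-diagonal corner $f$; in vertex coordinates write $b = (i_1, j_1)$, $a = (i_2, j_2)$, $d = (i_2, j_0)$, $c = (i_3, j_1)$, so that $e = (i_1, j_2)$, $f = (i_2, j_1)$, and $g = (i_3, j_0)$. I would show that the rectangle with diagonal corners $(i_1, j_0)$ and $(i_3, j_1)$ is an inner interval of $\mathcal{Q}$, which exhibits the pair $b, g$ as anti-diagonal corners of an inner interval. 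For columns $i \in [i_2, i_3 - 1]$ the required rows $[j_0, j_1 - 1]$ are already in $\mathcal{Q}$ thanks to $[d,c] \subseteq \mathcal{Q}$; for columns $i \in [i_1, i_2 - 1]$ the upper rows $[j_1, j_2 - 1]$ come from $[b,a] \subseteq \mathcal{Q}$, and the lower rows $[j_0, j_1 - 1]$ follow because $\alpha_{i_2} \leq j_0$ combined with the monotonicity of $(\alpha_i)$ gives $\alpha_i \leq j_0$ for all $i < i_2$.

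This confirms Condition~$(\#)$ for $\mathcal{Q}$, and Theorem~\ref{Theorem: Qureshi condition to have Groebner basis} then yields the desired quadratic Gröbner basis of $I_\mathcal{Q}$ with respect to $<_{\rev}$; the special cases of parallelograms, stacks, and Ferrers diagrams follow immediately because each is directed convex. The main hurdle I anticipate is the structural lemma on the valley/mountain shape of $(\alpha_i)$ and $(\beta_i)$ in a convex polyomino—while a folklore fact, it deserves a careful combined use of row convexity, column convexity, and edge-adjacency of successive columns. A secondary subtlety is to confirm that the arrangement in Figure~\ref{Figura: conditions for Groebner basis}(A) is faithfully captured by the vertex coordinates chosen above; any alternative relative position of $f$ within $[b,a]$ and $[d,c]$ should reduce to the present case, either by the dual use of $(\beta_i)$ in place of $(\alpha_i)$ or by observing that configurations with shared edges or nested intervals make Condition~$(\#)$ trivial.
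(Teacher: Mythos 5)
Your proposal is correct and follows the same overall strategy as the paper: normalize by a symmetry of the plane so that the lower-left corner cell of the minimal bounding rectangle belongs to the polyomino, then verify Condition~($\#$) by showing that $b$ and $g$ are always anti-diagonal corners of an inner interval. The genuine difference lies in how the rectangle spanned by $b$ and $g$ is shown to be filled with cells. The paper argues via connectivity: any path of cells from the lower-left corner cell $A$ to the cell $C$ with upper-right corner $c$ must contain a cell in vertical position with $B$ or in horizontal position with $D$, and row/column convexity then completes the inner interval directly. You instead pass through the column-profile description $[\alpha_i,\beta_i]$ and the unimodality (``valley'') of $(\alpha_i)$, which together with $\alpha_1=1$ forces $(\alpha_i)$ to be non-decreasing, so that $\alpha_i\le\alpha_{i_2}\le j_0$ supplies the missing lower cells in columns $i_1,\dots,i_2-1$ (you should also note explicitly that $\beta_i\ge j_1$ for these columns, which follows from $[b,a]\subseteq\Qc$). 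Your route is valid, but it rests on the valley/mountain lemma, which you rightly flag as the one item still requiring proof; it is short and standard --- consecutive columns of a polyomino have overlapping row ranges, and if $\alpha_i<\alpha_{i+1}>\alpha_{i+2}$ then the row $\alpha_{i+1}-1$ would meet columns $i$ and $i+2$ but not column $i+1$, contradicting row convexity --- whereas the paper's path argument bypasses this lemma entirely and is correspondingly shorter. Your final remark about alternative relative positions of $f$ is moot, since Condition~($\#$) in Theorem~\ref{Theorem: Qureshi condition to have Groebner basis} only concerns the specific configuration of Figure~\ref{Figura: conditions for Groebner basis}(A), which your coordinates capture faithfully.
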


\begin{proof}
    Let $\Pc$ be a directed convex polyomino with minimal bounding rectangle $[A,Z]$. Following the definition of directed convexity and Remark~\ref{rem:p=q}, we may assume that $A \in \Pc$. We only need to show that $\Pc$ satisfies Condition~($\#$) given in Theorem~\ref{Theorem: Qureshi condition to have Groebner basis}. 
    
    Assume that $[b,a]$ and $[d,c]$ are inner intervals of $\Pc$ with anti-diagonal corners $e, f$ and $f, g$, as shown in Figure~\ref{Figura: conditions for Groebner basis}(A). We claim that $b$ and $g$ are anti-diagonal corners of an inner interval of $\Pc$.

    Let $C \in \Pc$ be the cell with top-right corner $c$, and let $B, D \in \Pc$ be the cells with bottom-left corners $b$ and $d$, respectively. Since $\Pc$ is a polyomino, there exists a path of cells in $\Pc$ from $A$ to $C$. Then any such path must contain a cell that is either in a vertical position with $B$ or in a horizontal position with $D$. In both cases, due to the convexity of $\Pc$, we immediately obtain an inner interval of $\Pc$ with anti-diagonal corners $b$ and $g$. This completes the proof of the claim, and therefore $\Pc$ satisfies Condition~($\#$). 
\end{proof}

\begin{Remark}\label{rem:revandlex}
  Let $\Pc$ be a directed convex polyomino with minimal bounding rectangle $[A,B]$, and let $A'$ and $B'$ be the anti-diagonal corners of $[A,B]$. Following the proof of Proposition~\ref{prop:QbasisDirected} and using similar arguments, we have the following:
  
  \begin{enumerate}
      \item If $A \in \Pc$ or $B \in \Pc$, then $I_\Pc$ has a quadratic Gr\"obner basis with respect to $<_{\rev}$.
      \item If $A' \in \Pc$ or $B' \in \Pc$, then $I_\Pc$ has a quadratic Gr\"obner basis with respect to $<_{\lex}$.
  \end{enumerate}
\end{Remark}

We recall the following result from \cite{Q}, which provides insight into the structure of weakly connected convex collections of cells.

\begin{Lemma}\cite[Lemma 1.3]{Q}\label{lemma:weakly1}
Let $\Pc$ be a simple collection of cells, and let $\Pc_1$ and $\Pc_2$ be two connected components of $V(\Pc)$. Then $|V(\Pc_1) \cap V(\Pc_2)|\leq 1$.
\end{Lemma}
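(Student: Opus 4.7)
The plan is to argue by contradiction, producing a Jordan curve inside the polygonal realization $P := \bigcup_{C \in \Pc} C$ of $\Pc$ that topologically separates two cells not in $\Pc$, contradicting simplicity. Assume $|V(\Pc_1) \cap V(\Pc_2)| \ge 2$ and fix distinct shared vertices $u, v$. Since $\Pc_1$ and $\Pc_2$ are distinct polyomino components of $\Pc$, no cell of $\Pc_1$ shares an edge with any cell of $\Pc_2$. At $u$, the four incident cells form a $4$-cycle under edge-adjacency, so the only way $\Pc_1$ and $\Pc_2$ can both be incident to $u$ without sharing an edge is that $\Pc_1$ contributes one cell in one diagonal pair at $u$ and $\Pc_2$ contributes one cell in the opposite diagonal. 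Moreover, the remaining two cells incident to $u$ cannot belong to any component of $\Pc$, since they would be edge-adjacent to a cell of $\Pc_1$ or $\Pc_2$, so they must lie in the complement of $\Pc$; call these absent cells $C_1^u, C_2^u$, a diagonal pair at $u$.

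Next I would verify that each $\Pc_i$ is itself a simple polyomino, so that $P_i := \bigcup_{C \in \Pc_i} C$ is simply connected and its topological boundary $\partial P_i$ is a single grid Jordan curve. A hypothetical hole of $\Pc_1$ would force every cell adjacent to that hole's boundary to lie outside $\Pc$ (any such cell in another component $\Pc_j$ would share an edge with $\Pc_1$), and these cells would be topologically trapped inside $P_1$, contradicting the simplicity of $\Pc$. Because $\Pc_1$ and $\Pc_2$ share no edges, $\partial P_1 \cap \partial P_2$ is finite and contained in $V(\Pc_1) \cap V(\Pc_2)$. Choosing $u, v$ consecutive among these shared vertices along $\partial P_1$, the short arc $\alpha \subset \partial P_1$ from $u$ to $v$ meets $\partial P_2$ only at $u, v$; coupled with any arc $\beta \subset \partial P_2$ from $u$ to $v$, this yields a Jordan curve $\gamma := \alpha \cup \beta$ contained in $P_1 \cup P_2 \subseteq P$.

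By the Jordan curve theorem, $\RR^2 \setminus \gamma$ decomposes into a bounded component $U$ and an unbounded component $W$. A short local analysis at $u$, based on the four possible pairs of grid-edge directions that $\alpha$ (within $\partial P_1$) and $\beta$ (within $\partial P_2$) can take there, shows that $C_1^u$ and $C_2^u$ lie on opposite local sides of $\gamma$; hence one of them has its interior in $U$ and the other in $W$. Since $\gamma \subset P$, any edge-path in the complement of $\Pc$ between these two cells would correspond to a continuous path in $\RR^2 \setminus P \subseteq \RR^2 \setminus \gamma$ joining points of distinct components, which is impossible. This contradicts the simplicity of $\Pc$, proving the lemma.

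The principal technical care-point is the local verification at $u$ that $C_1^u$ and $C_2^u$ are genuinely separated by $\gamma$; this reduces to a finite case check on which grid edges at $u$ belong to $\partial P_1$ and $\partial P_2$, each case settled by elementary planar geometry. A secondary care-point is the preliminary step that each $\Pc_i$ is simple, needed to ensure $\partial P_i$ is a single Jordan curve; this follows from the same trapped-cell principle applied to a single component.
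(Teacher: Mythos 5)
The paper does not actually prove this lemma: it is quoted verbatim from \cite[Lemma 1.3]{Q}, so there is no in-paper argument to compare yours against. Judged on its own terms, your proof has the right structure and is essentially the expected topological argument. The local analysis at a shared vertex $u$ is correct: each of $\Pc_1,\Pc_2$ contributes exactly one cell at $u$, those two cells sit in diagonal position, and the remaining diagonal pair $C_1^u, C_2^u$ is forced out of $\Pc$ (either one, if present, would be edge-adjacent to cells of both components and would merge them). The gluing of boundary arcs $\alpha\subset\partial P_1$ and $\beta\subset\partial P_2$ between consecutive shared vertices into a Jordan curve $\gamma\subset P$, and the four-case check that $C_1^u$ and $C_2^u$ lie on opposite local sides of $\gamma$ at $u$, are sound. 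Combined with the standard observation that a path of edge-adjacent cells in the complement of $\Pc$ yields a continuous path (through cell centers and shared-edge midpoints) avoiding the closed set $P\supseteq\gamma$, this does contradict simplicity.

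The one step you should tighten is the claim that each $\partial P_i$ is a single Jordan curve. Your ``trapped-cell principle'' rules out holes, but a hole-free, edge-connected polyomino could a priori still have a \emph{pinch vertex}, i.e.\ a vertex at which exactly the two diagonal cells belong to $\Pc_i$; at such a vertex $\partial P_i$ is not locally an arc, so it is not a simple closed curve, and your notion of ``consecutive shared vertices along $\partial P_1$'' and the choice of arcs would break down. You must exclude pinch vertices separately: at such a vertex, edge-connectivity of $\Pc_i$ gives a cell path joining the two diagonal cells, and closing it up through the pinch vertex yields a simple closed curve inside $P_i$ that separates the two absent diagonal cells, again contradicting simplicity of $\Pc$. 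This uses the same separation principle you already invoke, but it is a different configuration from the trapped-hole case, and your sketch only addresses the latter. With that addition the argument is complete.
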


Now we present another class of collection of cells that admit quadratic Gröbner basis. 

\begin{Proposition}\label{prop:orientation}
Let $\Pc$ be a weakly connected convex collection of cells with more than one connected component. Then, up to a suitable symmetry of $\Pc$, the ideal $I_\Pc$ has a quadratic Gröbner basis with respect to $<_{\rev}$.
\end{Proposition}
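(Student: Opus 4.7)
The plan is to verify Condition~($\#$) of Theorem~\ref{Theorem: Qureshi condition to have Groebner basis} after reflecting $\Pc$ appropriately, so that the conclusion will follow immediately from that theorem.

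First I would collect two elementary observations. Each connected component $\Pc_i$ of $\Pc$ inherits convexity from $\Pc$. Moreover, two cells from distinct components of $\Pc$ can never be in horizontal or vertical position: otherwise convexity would fill the entire row (or column) between them with cells of $\Pc$, producing a chain of edge-adjacent cells that merges the two components into one. Together with Lemma~\ref{lemma:weakly1}, this means that any two distinct components meet in at most one vertex, and at such a shared vertex $f$ the two components occupy opposite quadrants at $f$. Each shared vertex is thus of exactly one of two types: the \emph{diagonal configuration}, where the cells incident to $f$ from the two components sit in the lower-left/upper-right quadrants of $f$, and the \emph{anti-diagonal configuration}, where they sit in the upper-left/lower-right quadrants.

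The core structural step, which I expect to be the main combinatorial obstacle, is the claim that in any weakly connected convex collection of cells with more than one component all shared vertices are of the same type. The argument is by contradiction along the tree of components: it suffices to rule out two shared vertices $v_1,v_2$ of different types lying on a common middle component $\Pc_m$, where one verifies that the two outer components attached at $v_1$ and $v_2$ are forced to have cells in a common row (or column), contradicting the observation above; the general case then reduces to this local one by tracing along chains of components. Granted this claim, a single horizontal reflection swaps the diagonal and anti-diagonal configurations everywhere, so after applying such a symmetry if needed one may assume that every shared vertex of $\Pc$ is in the diagonal configuration.

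Under this assumption, each component $\Pc_i$ contains either the lower-left or the upper-right cell of its own minimal bounding rectangle---precisely the cell of $\Pc_i$ incident to one of its shared vertices---so Remark~\ref{rem:revandlex}(1) ensures that Condition~($\#$) holds for every pair of inner intervals sitting inside a single $\Pc_i$. It remains to exclude the cross-component case, but this is automatic: if two inner intervals sharing an anti-diagonal corner $f$ lay in distinct components $\Pc_i,\Pc_j$, then $f$ would be the unique shared vertex of $\Pc_i$ and $\Pc_j$; yet in the diagonal configuration every cell incident to $f$ has $f$ as a diagonal corner, so $f$ is a diagonal and not an anti-diagonal corner of every inner interval touching it. Hence Condition~($\#$) holds globally for $\Pc$, and Theorem~\ref{Theorem: Qureshi condition to have Groebner basis} yields the desired quadratic Gr\"obner basis with respect to $<_{\rev}$.
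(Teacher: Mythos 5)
Your proposal is correct and follows essentially the same route as the paper's proof: establish via Lemma~\ref{lemma:weakly1} and convexity that the components meet pairwise in single vertices whose attachment type is uniform (the two staircase shapes of Figure~\ref{fig: Weakly connected convex collection of cells.}(A)--(B)), reflect to normalize to the diagonal configuration, and then verify Condition~($\#$) by reducing any pair of inner intervals to a single directed convex component, where Proposition~\ref{prop:QbasisDirected}/Remark~\ref{rem:revandlex}(1) applies. Your quadrant argument showing that a shared vertex cannot be an anti-diagonal corner of any inner interval is a correct and slightly more explicit justification of the step the paper states tersely ("$[b,a]$ and $[d,c]$ are inner intervals of some $\Pc_i$").
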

\begin{proof}
Let $\Pc$ be a weakly connected convex collection of cells with connected components $\Pc_1, \ldots, \Pc_r$ where $r \geq 2$. Due to the convexity of $\Pc$, for any $i \neq j$, none of the cells in $\Pc_i$ is in horizontal or vertical position with any cell in $\Pc_j$. Since every convex polyomino is also a simple polyomino, Lemma~\ref{lemma:weakly1} yields 
$|V(\Pc_i) \cap V(\Pc_j)| \leq 1$ for all $i \neq j$. Therefore, after renumbering the indices, we may assume that $|V(\Pc_i) \cap V(\Pc_{i+1})| = 1$, and for all $i \neq j$ with $j \notin \{i-1, i+1\}$, we have $V(\Pc_i) \cap V(\Pc_j) = \emptyset$.

We first observe that $\Pc_1$ and $\Pc_r$ are directed convex, and $\Pc_i$ is a parallelogram for all $i \in \{2, \dots, r-1\}$. To see this, for each $i \in [r-1]$, let $C_i \in \Pc_i$ and $D_{i+1} \in \Pc_{i+1}$ be such that $|V(C_i) \cap V(D_{i+1})| = 1$. Moreover, for each $i \in [r]$, let $[A_i, B_i]$ denote the minimal bounding rectangle of $\Pc_i$. The convexity of $\Pc$ implies that for each $i \in [r-1]$, the cells $C_i$ and $D_{i+1}$ are among the corner cells of $[A_i, B_i]$. Thus, $\Pc$ takes one of the forms shown in Figure~\ref{fig: Weakly connected convex collection of cells.} (A) and (B).

Since the reflection of Figure~\ref{fig: Weakly connected convex collection of cells.} (B) along the $y$-axis yields a collection of the form in Figure~\ref{fig: Weakly connected convex collection of cells.} (A), we may assume, by applying Remark~\ref{rem:p=q}, that $\Pc$ is of the form in Figure~\ref{fig: Weakly connected convex collection of cells.} (A). In other words, we assume that $|V(A_{i+1}) \cap V(B_i)| = 1$ for all $i \in [r-1]$.

\begin{figure}[h]
    \centering
    \subfloat[]{\includegraphics[scale=0.5]{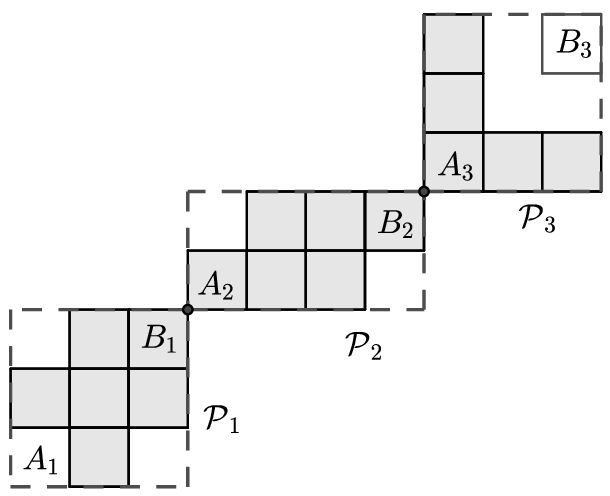}}\qquad
    \subfloat[]{\includegraphics[scale=0.5]{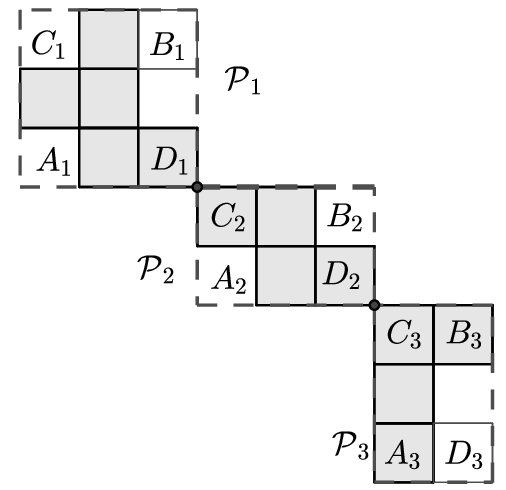}}\qquad
    \subfloat[]{\includegraphics[scale=0.5]{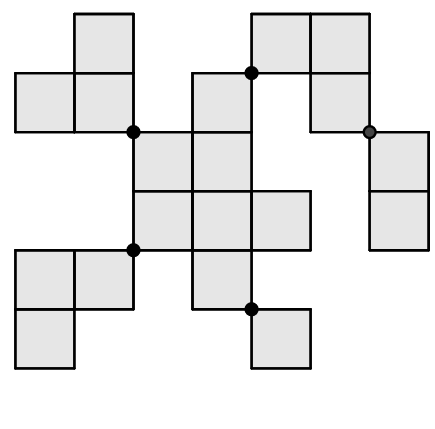}}
    \caption{Two weakly connected convex collections of cells and a weakly connected, simple and non-convex collection of cells, whose connected components are convex.}
    \label{fig: Weakly connected convex collection of cells.}
\end{figure}

To complete the proof, by Theorem~\ref{Theorem: Qureshi condition to have Groebner basis}, it is enough to show that $\Pc$ satisfies Condition~($\#$). This follows by observing that if $[b,a]$ and $[d,c]$ are inner intervals of $\Pc$ with anti-diagonal corners $e,f$ and $f,g$ as shown in Figure~\ref{Figura: conditions for Groebner basis} (A), then $[b,a]$ and $[d,c]$ are inner intervals of some $\Pc_i$ for $1 \leq i \leq r$. It then follows from item (1) of Remark~\ref{prop:QbasisDirected} that $\Pc_1, \ldots, \Pc_r$ each have a quadratic Gröbner basis with respect to $<_{\rev}$. In particular, each of $\Pc_1, \ldots, \Pc_r$ satisfies Condition~($\#$).
\end{proof}

Observe that the assumption of convexity is essential in the above result. For example, the weakly connected collection of cells in Figure~\ref{fig: Weakly connected convex collection of cells.} (C) does not satisfy either Condition~($\#$) or Condition~($\#'$).



\section{Hilbert series of convex collection of cells with quadratic Gr\"obner basis}\label{Sec:4}

Our goal is to prove Conjecture~\ref{conj:into} in the case of convex collections of cells with quadratic Gröbner basis. More precisely, we prove the following:

\begin{Theorem}\label{Thm:main}
    Let $\cP$ be a collection of cells whose weakly connected components are convex. If $\cP$ satisfies either Condition~($\#$) or Condition~($\#'$), then $h_{K[\cP]}(t)$ is the switching rook polynomial of $\cP$. Moreover, $\mathrm{reg}(K[\cP])$ is equal to the rook number of $\cP$.
\end{Theorem}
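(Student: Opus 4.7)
The plan is to reduce to a weakly connected $\cP$ satisfying Condition~($\#$), pass to the squarefree initial ideal to work inside a Stanley–Reisner ring, and then perform an inductive dissection whose algebraic and combinatorial sides match term by term.

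\textbf{Reductions.} By Remark~\ref{rem:tensorprod}, both $\tilde{r}_\cP(t)$ and $\rHP_{K[\cP]}(t)$ are multiplicative over weakly connected components, so it suffices to handle a weakly connected $\cP$ whose connected components are convex. Reflections permute both rook configurations and coordinate rings up to isomorphism (Remark~\ref{rem:p=q}), and by Remark~\ref{rem:condtion} they swap Conditions~($\#$) and ($\#'$); hence we may assume Condition~($\#$). Theorem~\ref{Theorem: Qureshi condition to have Groebner basis} then yields a quadratic Gröbner basis of $I_\cP$ with respect to $<_{\rev}$, with squarefree initial ideal $J := \mathrm{in}_{<_{\rev}}(I_\cP)$ generated by the anti-diagonal monomials $x_r x_s$ of the inner intervals of $\cP$. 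In particular $\rHP_{K[\cP]}(t) = \rHP_{S_\cP/J}(t)$, and $S_\cP/J$ is the Stanley–Reisner ring of the flag complex $\Delta(\cP)$ whose minimal non-faces are the anti-diagonal corner pairs of inner intervals.

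\textbf{Inductive dissection.} I would argue by induction on $|\cP|$. Using the orientation established in the proofs of Propositions~\ref{prop:QbasisDirected} and \ref{prop:orientation}, pick an extremal corner cell $C^*$ of $\cP$ (for instance, at the anti-diagonal corner of the bounding rectangle of the terminal connected component) and let $v$ be the anti-diagonal outer vertex of $C^*$. By convexity and extremality, $v$ is pendant in $\cP$ (it lies only in $C^*$) and is an anti-diagonal corner of many inner intervals of $\cP$. The short exact sequence
\[
0 \longrightarrow \bigl(S_\cP/(J:x_v)\bigr)(-1) \xrightarrow{\ \cdot x_v\ } S_\cP/J \longrightarrow S_\cP/(J+(x_v)) \longrightarrow 0
\]
gives
\[
\rHP_{S_\cP/J}(t) \;=\; t\,\rHP_{S_\cP/(J:x_v)}(t) \;+\; \rHP_{S_\cP/(J+(x_v))}(t).
\]
The goal is to identify $S_\cP/(J+(x_v))$, up to an extra free variable, with the Stanley–Reisner ring attached to $\cP_1 := \cP\setminus\{C^*\}$, and $S_\cP/(J:x_v)$ with that attached to $\cP_2$, the collection obtained from $\cP$ by deleting the row and column of $C^*$. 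On the combinatorial side, this mirrors the switching-rook recursion
\[
\tilde r_\cP(t) \;=\; \tilde r_{\cP_1}(t) \;+\; t\cdot \tilde r_{\cP_2}(t),
\]
obtained by separating classes of configurations according to whether a rook occupies $C^*$, noting that a rook placed at the extremal $C^*$ cannot participate in any further switch beyond those internal to $\cP_2$.

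\textbf{Closing the induction and regularity.} To apply the induction hypothesis I must verify that $\cP_1$ and $\cP_2$ inherit both the convexity of weakly connected components and Condition~($\#$). Convexity is immediate: removing a corner cell, or deleting an extremal row and column, preserves convexity on each surviving component. Condition~($\#$) is preserved because every inner interval of $\cP_i$ is already an inner interval of $\cP$, so witnesses required by ($\#$) inside $\cP_i$ are inherited from $\cP$. Matching the two recursions and invoking the induction hypothesis then yields $h_{K[\cP]}(t) = \tilde r_\cP(t)$. For the regularity statement, the same inductive construction exhibits a shelling of $\Delta(\cP)$, so $K[\cP]$ is Cohen–Macaulay, whence $\mathrm{reg}(K[\cP]) = \deg h_{K[\cP]}(t) = \deg \tilde r_\cP(t) = r(\cP)$.

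\textbf{Main obstacle.} The hard part is the precise combinatorial–algebraic matching in the dissection step: proving that $S_\cP/(J+(x_v))$ and $S_\cP/(J:x_v)$ really correspond, as Stanley–Reisner rings (modulo free variables), to $\cP_1$ and $\cP_2$. This requires enumerating all inner intervals of $\cP$ having $v$ as an anti-diagonal corner, tracking how their anti-diagonal monomials degenerate under quotienting by $x_v$ or are absorbed into the colon, and controlling the possibility that new non-faces appear or vanish in a way that is not mirrored by the cell-removal operation. Convexity is crucial here to ensure that the intervals through $v$ are governed entirely by the row and column of $C^*$, and Condition~($\#$) is needed to prevent obstructive configurations from appearing as the induction recurses.
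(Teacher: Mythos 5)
Your overall strategy --- reduce to a weakly connected collection satisfying Condition~($\#$), pass to the revlex initial ideal $J=\ini_{<_{\rev}}(I_\cP)$, and run an induction on $|\cP|$ driven by the short exact sequence $0 \to (S_\cP/(J:x_v))(-1) \to S_\cP/J \to S_\cP/(J+(x_v)) \to 0$ at a corner vertex $v$, matched against a rook recursion --- is exactly the paper's. But your dissection step contains a genuine error, not merely an unfinished verification. The companion object for the colon term is \emph{not} ``$\cP$ with the row and column of $C^*$ deleted.'' The generators of $J$ divisible by $x_v$ are the monomials $x_vx_u$ where $u$ ranges over the second anti-diagonal corners of \emph{all} inner intervals containing $v$; these $u$ are precisely the lower-right corners of the cells of $\cP_v$, the union of all inner intervals through $v$, which is a Ferrers diagram generally much larger than the row and column of $C^*$. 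Hence $(J:x_v)$ kills one variable per cell of $\cP_v$, and $S_\cP/(J:x_v)$ is a polynomial extension of the ring of $\cP$ with \emph{all of} $\cP_v$ removed; after this removal the remainder may split into a left piece and a right piece that must be reglued, which is why the paper needs the construction $\Lc_{\Pc_v}*\overline{\Rc_{\Pc_v}}$ and the case analysis of Proposition~\ref{prop:setup}. The same error infects your combinatorial recursion: a configuration with a rook on $C^*$ and a second rook elsewhere in $\cP_v$ (but not in the row or column of $C^*$) has those two rooks in switching position, so its switch-class is already counted among configurations avoiding $C^*$, and your formula counts it a second time.

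Concretely, let $\cP$ be the square tetromino. Then $\tilde r_\cP(t)=1+4t+t^2=h_{K[\cP]}(t)$, whereas your recursion gives $\tilde r_{\cP_1}(t)+t\,\tilde r_{\cP_2}(t)=(1+3t+t^2)+t(1+t)=1+4t+2t^2$, since $\cP_1$ is an L-tromino and your $\cP_2$ is the single opposite corner cell. The correct second term is $t\cdot\tilde r_{\cP''}(t)$ with $\cP''=\emptyset$, because here $\cP_v$ is the whole square. Repairing the argument requires (i) the observation that any rook in $\cP_v$ together with a rook on the top-left cell can be switched away, so only $(i-1)$-configurations supported on $\cP\setminus\cP_v$ contribute new classes, and (ii) bookkeeping of the free variables surviving on the two maximal edge intervals through $v$ (the exponent $d$ of Proposition~\ref{Proposition: decomposition Hilbert series}) so that the denominators of the two Hilbert series agree with $(1-t)^{\dim K[\cP]}$. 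Two smaller points: your inheritance claim for Condition~($\#$) is too quick, since the witness interval provided by ($\#$) in $\cP$ could a priori meet the removed cells (for this particular $v$ it does go through, cf.\ Lemma~\ref{Lemma: Polyomino P'} and Proposition~\ref{Prop: Polyomino P''}); and the Cohen--Macaulayness needed for the regularity statement is quoted from \cite{CNJ} rather than obtained from a shelling, which you assert but do not construct.
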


As a consequence, together with \cite[Corollary 2.3]{CNJ}, we obtain the following result.

\begin{Corollary}\label{Coro of main}
    Let $\cP$ be a simple collection of cells whose connected components are convex polyominoes satisfying Condition~($\#$) or Condition~($\#'$). Then $h_{K[\cP]}(t)$ is the switching rook polynomial of $\cP$ and $\mathrm{reg}(K[\cP])$ is equal to the rook number of $\cP$.
\end{Corollary}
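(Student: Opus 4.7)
The plan is to prove Theorem~\ref{Thm:main} by induction on the rank $|\cP|$, combining additivity of the Hilbert–Poincar\'e series via a short exact sequence with a parallel combinatorial recursion for the switching rook polynomial.

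First, I would reduce to a convenient form for $\cP$. By Remark~\ref{rem:tensorprod}, both $h_{K[\cP]}(t)$ and $\tilde r_\cP(t)$ factor as products over weakly connected components, so I may assume $\cP$ is weakly connected. By Remark~\ref{rem:condtion} together with Remark~\ref{rem:p=q}, applying a reflection to $\cP$ interchanges Conditions~$(\#)$ and~$(\#')$ while preserving $K[\cP]$ up to isomorphism and preserving $\tilde r_\cP(t)$, so I may further assume that $\cP$ satisfies~$(\#)$. Theorem~\ref{Theorem: Qureshi condition to have Groebner basis} then gives
$$
\mathrm{HP}_{K[\cP]}(t) \;=\; \mathrm{HP}_{S_\cP/J}(t),\qquad J \;:=\; \mathrm{in}_{<_{\rev}}(I_\cP),
$$
where $J$ is the squarefree monomial ideal generated by all $x_r x_s$ with $r,s$ anti-diagonal corners of some inner interval of $\cP$. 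Moreover, Propositions~\ref{prop:QbasisDirected} and~\ref{prop:orientation} describe $\cP$ precisely: up to a further symmetry, $\cP$ is either a convex polyomino whose bounding rectangle has its lower-left corner cell in $\cP$, or a chain of directed convex pieces glued at corners as in Figure~\ref{fig: Weakly connected convex collection of cells.}(A).

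For the inductive step, I would identify a distinguished extremal vertex $v\in V(\cP)$—concretely, a corner vertex of a terminal directed convex piece—and exploit the short exact sequence
$$
0 \longrightarrow \bigl(S_\cP/(J:x_v)\bigr)(-1) \xrightarrow{\;\cdot x_v\;} S_\cP/J \longrightarrow S_\cP/(J+(x_v)) \longrightarrow 0,
$$
yielding
$$
\mathrm{HP}_{K[\cP]}(t) \;=\; \mathrm{HP}_{S_\cP/(J+(x_v))}(t) \;+\; t\cdot \mathrm{HP}_{S_\cP/(J:x_v)}(t).
$$
I would then identify $S_\cP/(J+(x_v))$ with the coordinate ring (after passage to the initial ideal) of a smaller collection $\cP'$ obtained from $\cP$ by peeling off the cells meeting $v$, and $S_\cP/(J:x_v)$ with that of a smaller collection $\cP''$ obtained by the corresponding contraction at $v$. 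In parallel, I would establish the matching combinatorial recursion $\tilde r_\cP(t) = \tilde r_{\cP'}(t) + t\,\tilde r_{\cP''}(t)$ by partitioning $\tilde{\cR}(\cP)$ according to the two scenarios at $v$—``no rook at the forced position'' versus ``forced rook at the position''—using the extremality of $v$ to ensure this partition respects the switching equivalence $\sim$. Combining both recursions with the inductive hypothesis applied to $\cP'$ and $\cP''$ completes the proof of $h_{K[\cP]}(t) = \tilde r_\cP(t)$.

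The statement about regularity then follows from the equality $\deg h_{K[\cP]}(t) = \deg \tilde r_\cP(t) = r(\cP)$, together with the Cohen–Macaulayness of $K[\cP]$—a consequence of the shellability of the Stanley–Reisner complex associated to $J$ under Condition~$(\#)$—implying $\mathrm{reg}(K[\cP]) = \deg h_{K[\cP]}(t)$. The hardest part of the proof, I expect, will be the precise choice of the extremal vertex $v$ and the identification of the colon and sum quotients with coordinate rings of concrete smaller collections $\cP'$ and $\cP''$ whose weakly connected components are again convex and which still satisfy Condition~$(\#)$, so that the induction closes. This combinatorial-algebraic translation is the crux of the dissection referenced in the introduction.
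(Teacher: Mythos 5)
Your proposal is a proof sketch of Theorem~\ref{Thm:main}, and for that theorem it follows the paper's argument quite closely (deletion at the distinguished corner vertex $v$, the short exact sequence relating $(J:x_v)$ and $(J+(x_v))$, identification of the two quotients with smaller collections $\cP'$ and $\cP''$, the parallel recursion $\tilde r_\cP(t)=\tilde r_{\cP'}(t)+t\,\tilde r_{\cP''}(t)$, and induction). But the statement you were asked to prove is Corollary~\ref{Coro of main}, whose hypotheses are genuinely weaker: only the \emph{connected} components of $\cP$ are required to be convex polyominoes satisfying Condition~($\#$) or Condition~($\#'$), while $\cP$ itself is merely assumed simple. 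Your very first reduction --- ``after a reflection I may assume $\cP$ satisfies ($\#$)'' --- fails in this setting: a weakly connected component of $\cP$ may be non-convex and satisfy neither condition even though each of its connected components does. The collection in Figure~\ref{fig: Weakly connected convex collection of cells.}(C) is exactly such an example (the paper points out it violates both conditions, yet Example~\ref{Exa: rook polynomial} applies the corollary to it). Moreover, distinct connected components could satisfy ($\#$) and ($\#'$) respectively, so no single global reflection normalizes all of them simultaneously.

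The missing ingredient is a reduction to connected components. The paper deduces the corollary from Theorem~\ref{Thm:main} together with \cite[Corollary 2.3]{CNJ}, which (using that connected components of a simple collection pairwise share at most one vertex, Lemma~\ref{lemma:weakly1}) expresses the Hilbert--Poincar\'e series, hence the $h$-polynomial, of $K[\cP]$ in terms of those of the connected components; on the combinatorial side one observes that the switching rook polynomial is likewise multiplicative over connected components, since two cells lying in different connected components are never in a common row, common column, or common inner interval. With that reduction in place each connected component is a convex polyomino satisfying ($\#$) or ($\#'$), and Theorem~\ref{Thm:main} applies to it componentwise. A smaller point: for the regularity claim you invoke shellability of the Stanley--Reisner complex of the initial ideal, which you do not establish; the paper instead quotes the Cohen--Macaulayness of $K[\cP]$ for simple collections from \cite[Proposition 1.3]{CNJ} and then uses $\reg(K[\cP])=\deg h_{K[\cP]}(t)$.
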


To prove the above theorem, we need some preparation. First, we find a suitable symmetry of $\cP$ which allows us to dissect $\Pc$ in a useful way. We begin with the following assumption.

\begin{Assumption}\label{assump:1}
Due to Remark~\ref{rem:tensorprod}, it is enough to consider the case when $\Pc$ has exactly one weakly connected component. Furthermore, due to Remarks~\ref{rem:p=q} and \ref{rem:condtion}, to prove Theorem~\ref{Thm:main}, it is enough to consider the case when $\cP$ is a weakly connected convex collection of cells satisfying Condition~($\#$). Following the proof of Proposition~\ref{prop:orientation}, we have that $\Pc$ satisfies Condition~($\#$) if and only if it is of the form shown in Figure~\ref{fig: Weakly connected convex collection of cells.}(A). Therefore, in the sequel, we assume that $\Pc$ is of the form given in Figure~\ref{fig: Weakly connected convex collection of cells.}(A) and satisfies Condition~($\#$). 
\end{Assumption}

Working under Assumption~\ref{assump:1}, we introduce the following setup, which will be used throughout the following text.

\begin{Setup}\label{subsec:setup}
Let $\Pc$ be a weakly connected convex collection of cells as illustrated in Figure~\ref{fig: Weakly connected convex collection of cells.}(A). Let $C_1, \ldots, C_m$ be the columns of $\Pc$ numbered from left to right, and let $R_1, \ldots, R_n$ be the rows of $\Pc$ numbered from bottom to top. Let $k$ be the minimum integer such that $C_k \cap R_n\neq \emptyset$. We refer to the cell $X$ in this intersection as the {\em top left cell} of $\Pc$, and let $v$ be the upper left corner of $X$. Note that if $v$ belongs to an inner interval $I$ of $\cP$, then $v$ is the left anti-diagonal corner of $I$. Let $\Pc_v$ be the union of all inner intervals of $\Pc$ containing $v$. Let $[X,Z]$ and $[Y,X]$ be the maximal horizontal and vertical cell intervals of $\Pc$, respectively. Then the minimal bounding rectangle of $\Pc_v$ is $[Y,Z]$. Note that $\Pc_v$ is a Ferrers diagram because the three corners of $[Y,Z]$, namely $X$, $Y$, and $Z$, belong to $\Pc_v$. Let $Y \in R_j$ and $Z\in C_p$ for some $j$ and $p$. We illustrate this setup in Figure~\ref{fig:Ferrer diagram Pv + LRD + LRDE}(A). The weakly connected convex collections of cells considered in Figure~\ref{fig:Ferrer diagram Pv + LRD + LRDE} have only one connected component; that is, they are polyominoes. 

\begin{figure}[h]
    \centering
    \subfloat[]{\includegraphics[scale=0.6]{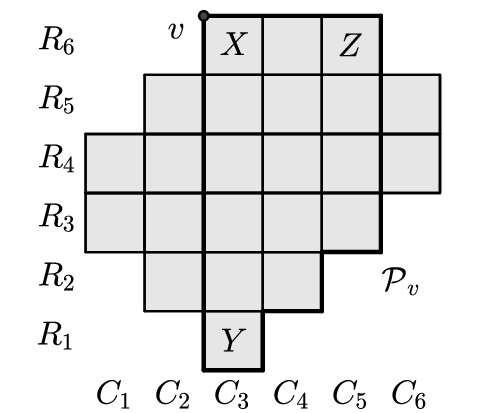}}\quad
    \subfloat[]{\includegraphics[scale=0.6]{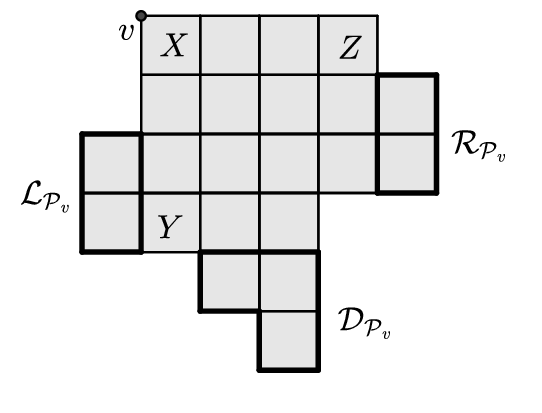}}
    \subfloat[]{\includegraphics[scale=0.6]{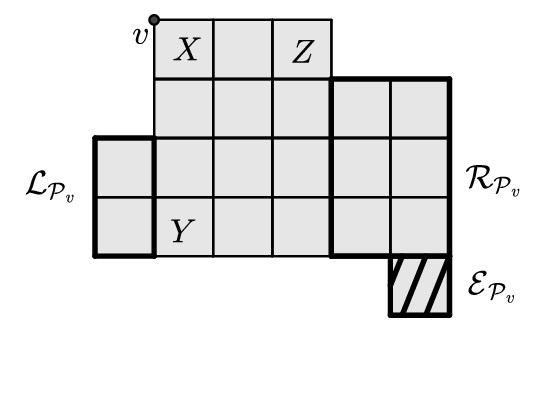}}
    \caption{An illustration of the setup}
    \label{fig:Ferrer diagram Pv + LRD + LRDE}
\end{figure}

We make the following observations:
\begin{enumerate}
    \item If $j > 1$, then $C_k \cap R_{\ell} = \emptyset$ for all $1 \leq \ell \leq j-1$, because $[Y,X]$ is a maximal vertical cell interval of $\Pc$. 
    \item If $p < m$, then $C_{\ell} \cap R_n = \emptyset$ for all $p+1 \leq \ell \leq m$, because $[X,Z]$ is a maximal horizontal cell interval of $\Pc$.
\end{enumerate}

We distinguish the following subcollections of cells of $\Pc$; see Figures~\ref{fig:Ferrer diagram Pv + LRD + LRDE}(B) and (C) for reference. Simply speaking, these subcollections represent the cells of $\Pc$ that appear to the left, right, below, and diagonally right-below of the Ferrers diagram $\Pc_v$.
\begin{enumerate}
    \item[{\em(1)}] $\Lc_{\Pc_v}= \{C \in \Pc : C \in C_q \text{ and } 1 \leq q \leq k-1\}$.
    
    \item[{\em(2)}] $\Rc_{\Pc_v}= \{C \in \Pc : C \in C_q \cap R_{q'} \text{ and } p+1 \leq q \leq m,\ j \leq q' \leq n-1\}$.
    
    \item[{\em(3)}] $\Dc_{\Pc_v}= \{C \in \Pc : C \in C_q \cap R_{q'} \text{ and } k+1 \leq q \leq p,\ 1 \leq q' \leq j-1\}$.
    
    \item[{\em(4)}] $\Ec_{\Pc_v}= \{C \in \Pc : C \in C_q \cap R_{q'} \text{ and } p+1 \leq q \leq m,\ 1 \leq q' \leq j-1\}$.
\end{enumerate}

Note that $\Ec_{\Pc_v} \neq \emptyset$ only if $[Y,Z] \subseteq \Pc$. Moreover, $\Lc_{\Pc_v}$ is disjoint from $\Rc_{\Pc_v}$, $\Dc_{\Pc_v}$, and $\Ec_{\Pc_v}$. 

\end{Setup}

\begin{figure}[h]
    \centering
    \includegraphics[scale=0.55]{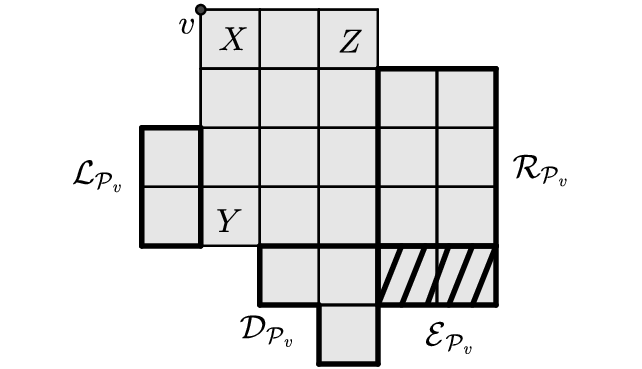}
    \caption{Convex polyomino with $\Ec_{\Pc_v} \neq \emptyset$ but not satisfying Condition~($\#$). }
    \label{fig:Ferrer diagram LRDE no condition}
\end{figure}

With the setup introduced above, we proceed to identify a suitable symmetry of $\Pc$ in the following proposition. For the polyomino $\Pc$ depicted in Figure~\ref{fig:Ferrer diagram LRDE no condition}, we observe that $\Ec_{\Pc_v} \neq \emptyset$, which implies that the subcollections $\Rc_{\Pc_v}$ and $\Dc_{\Pc_v}$ intersect at a vertex. It is easy to see that this polyomino does not satisfy Condition~($\#$). The proposition below clarifies this situation.


\begin{Proposition}\label{prop:setup}
Let $\Pc$ be a weakly connected convex collection of cells that satisfies Condition~($\#$). Then:
\begin{enumerate}
    \item[{\em(i)}] If $\Ec_{\Pc_v} \neq \emptyset$, then exactly one of $\Rc_{\Pc_v}$ or $\Dc_{\Pc_v}$ is nonempty, and the other must be empty.
    \item[{\em(ii)}] If $\Ec_{\Pc_v} = \emptyset$, then, up to a suitable symmetry of $\Pc$, we may assume that both $\Ec_{\Pc_v} = \emptyset$ and $\Dc_{\Pc_v} = \emptyset$.
\end{enumerate}
\end{Proposition}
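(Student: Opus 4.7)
The plan is to prove (i) and (ii) separately by exhibiting, in each case, pairs of inner intervals whose shared anti-diagonal corners force, via Condition~($\#$), ``completion'' inner intervals that are blocked by the maximality of the horizontal interval $[X,Z]$ and the vertical interval $[Y,X]$.

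For (i), since $\Ec_{\Pc_v}\neq\emptyset$ implies $\Pc_v=[Y,Z]$ by the observation after Setup~\ref{subsec:setup}, I would first establish that $(p+1,j-1)\notin\Pc$. Otherwise, the inner intervals $I_1=[(k,j),(p+1,n+1)]$ (the $\Pc_v$-rectangle) and $I_2=[(p+1,j-1),(p+2,j)]$ (the single cell $(p+1,j-1)$) share the anti-diagonal corner $f=(p+1,j)$; both completions required by ($\#$) fail, because one demands $(k,j-1)\in\Pc$, blocked by maximality of $[Y,X]$, and the other demands $(p+1,n)\in\Pc$, blocked by maximality of $[X,Z]$. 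With $(p+1,j-1)\notin\Pc$, no $\Ec$-cell shares a vertex with $\Pc_v$ directly, so weak connectivity forces at least one of $\Rc_{\Pc_v}$, $\Dc_{\Pc_v}$ to be nonempty. To show not both, I would assume both are and derive a contradiction: column convexity applied to a $\Dc$-cell together with row $n$ gives $(q_2,j-1)\in\Pc$ for some $q_2\in[k+1,p]$; row convexity applied to the vertical interval $[Y,X]$ together with an $\Rc$-cell gives $(p+1,q'_1)\in\Pc$ for some $q'_1\in[j,n-1]$. Given also the $\Ec$-cell $(q_3,q'_3)$, a case analysis on whether its column reaches row $j-1$ (via column convexity, so that $(q_3,j-1)\in\Pc$ and row convexity of row $j-1$ then forces $(p+1,j-1)\in\Pc$) or whether $\Ec$-cells are strictly below row $j-1$ (in which case row convexity of row $q'_3$ forces column $p+1$ to contain a cell in that row, and then column convexity of column $p+1$ combined with $(p+1,q'_1)\in\Pc$ yields $(p+1,j-1)\in\Pc$) always forces $(p+1,j-1)\in\Pc$, contradicting the first step. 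The only residual configuration---$\Ec$-cells entirely confined to columns $\geq p+2$ and to rows $<j-1$---is ruled out because such cells have no vertex-sharing neighbor outside $\Ec$ and hence violate weak connectivity.

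For (ii), if $\Dc_{\Pc_v}=\emptyset$ already, we are done; otherwise, I apply the diagonal reflection $(x,y)\mapsto(y,x)$ to $\Pc$, which preserves Condition~($\#$) since it preserves anti-diagonal corners, and re-derive the Setup for the reflected collection $\Pc'$. A direct analysis of the new top-left cell $X'$ (corresponding to the bottom-most cell in the rightmost column of the original $\Pc$, which need not be the reflection of the old $X$) and the re-derived Ferrers $\Pc'_{v'}$ shows that the hypothesis $\Ec_{\Pc_v}=\emptyset$ translates, together with the maximality of $[Y,X]$, into the vanishing of both $\Dc_{\Pc'_{v'}}$ and $\Ec_{\Pc'_{v'}}$; cells of the original $\Dc$ get redistributed either into $\Lc_{\Pc'_{v'}}$ or into the new Ferrers itself.

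The main obstacle is the case analysis in (i): showing that $\Rc$, $\Dc$, $\Ec$ simultaneously nonempty always forces $(p+1,j-1)\in\Pc$ requires carefully combining row/column convexity with the newly-established fact $(p+1,j-1)\notin\Pc$ and with weak connectivity to rule out the isolated-$\Ec$ configuration. For (ii), the subtlety is that the new Ferrers $\Pc'_{v'}$ after reflection can be very different from the reflection of the old $\Pc_v$, so one must carefully track how the four regions transform under the re-anchoring at $X'$.
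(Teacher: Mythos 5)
Your argument for part (i) is essentially the paper's: both proofs first show that the cell in column $C_{p+1}$ and row $R_{j-1}$ cannot belong to $\Pc$ (the paper, like you, gets this by applying Condition~($\#$) to the rectangle $[Y,Z]$ and that single cell, with both completions blocked by the maximality of $[X,Z]$ and $[Y,X]$), and then use convexity to confine $\Ec_{\Pc_v}$ to columns $\geq p+2$ and rows $\leq j-2$, contradicting weak connectivity. Your case split (some $\Ec$-cell in column $p+1$, some $\Ec$-cell in row $j-1$, or neither) is a harmless reorganization of that, although the phrase ``row convexity of row $q'_3$ forces column $p+1$ to contain a cell in that row'' presupposes a cell to the left in that row which you have not produced; the residual case absorbs this, so (i) stands.

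Part (ii) has a genuine gap: the diagonal reflection $(x,y)\mapsto(y,x)$ does not do what you claim. Take $\Pc$ to be the $2\times 2$ square of cells occupying columns $1$--$2$ and rows $2$--$3$, together with one extra cell in column $2$, row $1$. Three corner cells of its bounding rectangle lie in $\Pc$, so it is a Ferrers diagram and satisfies Condition~($\#$); here $\Pc_v$ is the $2\times2$ square, $j=2$, $\Ec_{\Pc_v}=\emptyset$ and $\Dc_{\Pc_v}$ is the single bottom cell. Reflecting across the diagonal gives the $2\times2$ square in columns $2$--$3$, rows $1$--$2$, plus one cell in column $1$, row $2$: the new top-left cell is that extra cell, the new maximal vertical interval through it is a single cell (so again $j'=2$), the new Ferrers diagram is the entire top row, and the new $\Dc$ consists of the two cells in row $1$ --- still nonempty. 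So the ``direct analysis'' you defer to is false, and this is precisely where the content of (ii) lives. The paper instead first proves a structural fact --- that the union $\Pc_u$ of inner intervals containing the bottom-right corner $u=(\ell+1,1)$ is a rectangle $[B,X']$ with $B$ in the top row $[X,Z]$, using $\Ec_{\Pc_v}=\emptyset$ and the emptiness of $C_k\cap R_{q'}$ for $q'<j$ --- and then applies the $180^\circ$ rotation, after which the new Ferrers diagram is that full-height rectangle, so the new $\Dc$ and $\Ec$ are vacuously empty (in the example above, the rotation yields $\Pc_v=$ column $1$, rows $1$--$3$, and indeed $\Dc=\Ec=\emptyset$). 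A secondary issue: your justification that the diagonal reflection preserves Condition~($\#$) (``it preserves anti-diagonal corners'') is not sufficient as stated, since the two completion intervals demanded by ($\#$) transform into a different pair of intervals under that reflection; the equivalence is true but requires an argument. Finally, the paper dispatches the case of several connected components separately (there $\Dc_{\Pc_v}=\Ec_{\Pc_v}=\emptyset$ automatically), which your write-up does not address.
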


\begin{proof}
Assume $\Pc$ is a weakly connected convex collection of cells satisfying Condition~($\#$), and let its connected components be $\Pc_1,\dots,\Pc_r$, with $|V(\Pc_i)\cap V(\Pc_{i+1})|=1$ for each $i$. Recall the configuration in Figure~\ref{fig:Ferrer diagram Pv + LRD + LRDE}(A).

If $r \ge 2$, let $v \in V(\Pc_r)$ and observe that all other components lie in $\Lc_{\Pc_v}$. Hence $\Dc_{\Pc_v} = \Ec_{\Pc_v} = \emptyset$, proving (ii).

Now consider $r = 1$, so $\Pc$ is a convex polyomino. Let us apply the Setup from \ref{subsec:setup}.

\medskip
\noindent\textbf{(i)} Suppose $\Ec_{\Pc_v} \neq \emptyset$, and take any cell $E \in \Ec_{\Pc_v}$. We claim that exactly one of $\Rc_{\Pc_v}$, $\Dc_{\Pc_v}$ is nonempty. Indeed, if both were empty, Condition~($\#$) would fail. If both were nonempty, pick $B \in \Rc_{\Pc_v}$ and $D \in \Dc_{\Pc_v}$. Let $D$ lie in column $C_a$ for some $k+1\leq a \leq p$ and $B$ lie in row $R_b$ for some $j\leq b\leq n-1$. Since $\Ec_{\Pc_v}\neq \emptyset$, we obtain $j>1$ and $p < n$. Observe that $C_{p+1} \cap R_{j-1} = \emptyset$. Indeed, if $C_{p+1} \cap R_{j-1} = C$, then the cell interval $[Y,Z]$ and $C$ violate the condition ($\#$), a contradiction. Since $\Pc$ is convex, it follows immediately that $C_{p+1}\cap R_{b} \neq \emptyset$ and $C_{a}\cap R_{j-1} \neq \emptyset$. Since $C_{p+1} \cap R_{j-1} = \emptyset$, due to the convexity of $\Pc$, we obtain $C_{p+1} \cap R_{q'} = \emptyset$ for all $1 \leq q' \leq j-1$, and $C_q \cap R_{j-1} = \emptyset$ for all $p+1 \leq q \leq m$. Therefore, the cells in $\Ec_{\Pc_v}$ cannot be connected to the cells in $\Pc \setminus \Ec_v$, a contradiction to the definition of weakly connected collection of cells. Thus exactly one of the subcollections is nonempty, proving (i).
 

\medskip
\noindent\textbf{(ii)}  Now assume $\Ec_{\Pc_v} = \emptyset$. Let $\ell$ be the maximum column index such that $C_\ell \cap R_1 \neq \emptyset$.  Let $[Y, W]$ be the maximal horizontal cell interval containing $Y$. Then $W \in C_a$ for some $k \leq a \leq p$. The convexity of $\Pc$ together with the assumption that $\Ec_{\Pc_v}= \emptyset$ and the observation that $C_{k} \cap R_{j-1} = \emptyset$ shows that $k+1 \leq \ell \leq p$.  Denote by $X'$ the cell in $C_\ell \cap R_1$ {\em bottom right cell} of $\Pc$ and the vertex $ u=(\ell+1,1) \in V(\Pc)$ as {\em bottom right corner of $\Pc$}. Let $\Pc_u$ be the subpolyomino of $\Pc$ consisting of all the inner intervals of $\Pc$ containing $u$. (The same way we constructed $\Pc_v$). Let $[A, X']$ be the maximal horizontal cell interval containing $X'$.  Since $C_{k} \cap R_{q'} = \emptyset$ for all $1 \leq q'\leq j-1$, we obtain $A \in C_q$ for some $k+1 \leq q \leq p$. Thus, due to the convexity of $\Pc$, we conclude that $\Pc_u$ is a rectangle $[B, X']$, where $B \in [X, Z]$. We rotate $\Pc$ 180 degrees to obtain $\Qc$ with top left cell $X'$ and top left corner $u$. Since this rotation preserves the Condition~($\#$), and $I_{\Pc}= I_{\Qc}$ as noted in Remark~\ref{rem:p=q}, we may replace $\Pc$ with $\Qc$ in our discussion above to obtain $\Dc_{\Pc_v}=\Ec_{\Pc_v}= \emptyset$. 
\end{proof}

\begin{Assumption}\label{assump:rotation}
In light of Proposition~\ref{prop:setup}, to prove Theorem~\ref{Thm:main}, we may assume that the weakly connected convex collection of cells $\Pc$ satisfies one of the following conditions:
\begin{enumerate}
    \item[(i)] $\Ec_{\Pc_v} = \Dc_{\Pc_v} = \emptyset$.
    \item[(ii)] $\Ec_{\Pc_v} \neq \emptyset$, and exactly one of $\Rc_{\Pc_v}$ or $\Dc_{\Pc_v}$ is nonempty, while the other is empty.
\end{enumerate}
\end{Assumption}

In the sequel, we will work under Assumption~\ref{assump:rotation}. Next, we introduce the following notation:

\begin{Notation}\label{Notation:bar+star}
We define:
\begin{enumerate}
    \item[$\bullet$] If $\Dc_{\Pc_v} = \emptyset$, then $\overline{\Rc_{\Pc_v}} = \Rc_{\Pc_v} \cup \Ec_{\Pc_v}$.
    \item[$\bullet$] If $\Rc_{\Pc_v} = \emptyset$, then $\overline{\Dc_{\Pc_v}} = \Dc_{\Pc_v} \cup \Ec_{\Pc_v}$.
\end{enumerate}

Moreover, if at least one of $\Lc_{\Pc_v}$ or $\overline{\Rc_{\Pc_v}}$ is nonempty, we define $\Lc_{\Pc_v} * \overline{\Rc_{\Pc_v}}$ to be the polyomino obtained by identifying each $(p+1, q) \in V(\overline{\Rc_{\Pc_v}})$ with $(k, q)$, as illustrated in Figure~\ref{fig: gluing of LRDE}. In other words, we shift each $(i, j) \in V(\overline{\Rc_{\Pc_v}})$ backward by $p+1 - k$ units so that $\Lc_{\Pc_v}$ and $\overline{\Rc_{\Pc_v}}$ can be glued together. In the case where $\Lc_{\Pc_v} = \overline{\Rc_{\Pc_v}} = \emptyset$, we set $\Lc_{\Pc_v} * \overline{\Rc_{\Pc_v}} = \emptyset$.

Note that $\Lc_{\Pc_v} * \overline{\Rc_{\Pc_v}}$ is also a weakly connected convex collection of cells. Furthermore:
\begin{itemize}
    \item If $\overline{\Rc_{\Pc_v}} = \emptyset$, then $\Lc_{\Pc_v} * \overline{\Rc_{\Pc_v}} = \Lc_{\Pc_v}$;
    \item If $\Lc_{\Pc_v} = \emptyset$, then $\Lc_{\Pc_v} * \overline{\Rc_{\Pc_v}}$ is simply a translation of $\overline{\Rc_{\Pc_v}}$.
\end{itemize}
\end{Notation}

\begin{figure}[h]
    \centering
    \subfloat[Here $\Dc_{\Pc_v} = \emptyset$]{\includegraphics[scale=0.55]{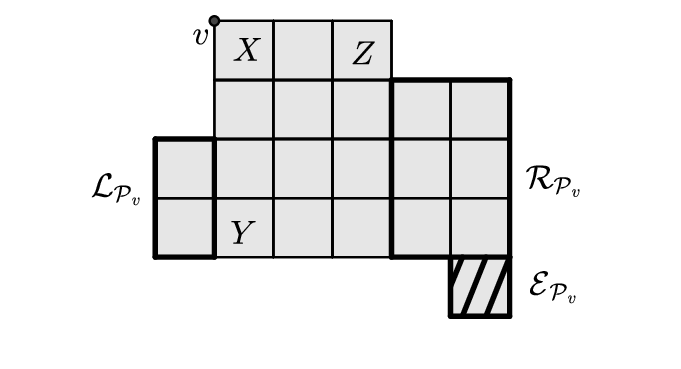}}\quad
    \subfloat[$\Lc_{\Pc_v} * \overline{\Rc_{\Pc_v}}$]{\includegraphics[scale=0.55]{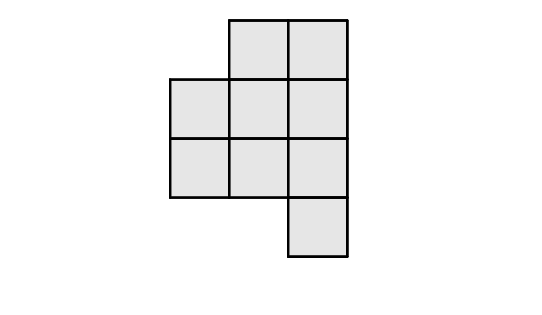}}
    \caption{An example of $\Lc_{\Pc_v} * \overline{\Rc_{\Pc_v}}$.}
    \label{fig: gluing of LRDE}
\end{figure}

We now recall some basic definitions in graph theory. Let $G$ be a finite graph with vertex set $V(G)$ and edge set $E(G)$. If $\{u, v\}\in E(G)$, then $u$ and $v$ are said to be adjacent in $G$. For a vertex $v \in V(G)$, the \textit{open neighborhood} of $v$, denoted by $N_G(v)$, is the set of vertices of $G$ adjacent to $v$. The \textit{closed neighborhood} of $v$, denoted by $N_G[v]$, is defined as $N_G(v)\cup \{v\}$. For a subset $A\subset V(G)$, the graph $G \setminus A$ is obtained by removing all vertices in $A$ and all edges incident to a vertex in $A$.

Let $G$ be a graph with vertex set $\{x_1, \dots, x_n\}$, and let $R=K[x_1, \dots, x_n]$ be a polynomial ring over a field $K$. For convenience, we use $x_i$ both to denote a vertex of $G$ and as a variable in $R$. The \textit{edge ideal} $I(G)$ associated with $G$ is the ideal in $R$ generated by all squarefree monomials $x_i x_j$ such that $x_i$ is adjacent to $x_j$ in $G$.

Let $\Pc$ be a convex collection of cells satisfying condition~$(\#)$. It follows from Proposition~\ref{Theorem: Qureshi condition to have Groebner basis} that the initial ideal of $I_\Pc$ with respect to the term order $<_{\rev}$, denoted by $\ini_{<_{\rev}}(I_{\Pc})$, is generated by monomials $x_{a}x_{b}$ where $a$ and $b$ are the antidiagonal corners of some inner interval of $\Pc$. Since $\ini_{<_{\rev}}(I_{\Pc})$ is generated in degree 2, we may view it as the edge ideal of a graph. In the sequel, we denote by $G_{\Pc}$ the graph whose edge ideal is $\ini_{<_{\rev}}(I_{\Pc})$.

\begin{Lemma}\label{Lemma: Polyomino P'}
    Let $X$ be the top-left corner cell and let $v$ be the top-left corner of a convex collection of cells $\Pc$ satisfying condition~$(\#)$. Let $\Pc' = \Pc \setminus \{X\}$. Then $\Pc'$ also satisfies condition~$(\#)$, and $G_{\Pc} \setminus \{v\} = G_{\Pc'}$.
\end{Lemma}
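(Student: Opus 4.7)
The plan is to exploit the extremal position of the vertex $v = (k, n+1)$, which is unique to the cell $X$. The crux is the following key observation: \emph{for any inner interval $I$ of $\Pc$, the vertex $v$ is a corner of $I$ if and only if $X \in I$, in which case $v$ is the upper-left (antidiagonal) corner of $I$.} To justify this, note first that $v$ is a corner of no cell of $\Pc$ other than $X$, because no cell lies in row $n+1$ (by maximality of $n$) nor in $C_{k-1} \cap R_n$ (by minimality of $k$). Hence $v$ can only serve as the upper-left corner of any inner interval it belongs to, and such an interval must then contain $X = (k,n)$ as its upper-leftmost cell. Conversely, if $X \in I$, the extremality of $X$ forces the upper-left corner of $I$ to coincide with $v$.

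\emph{Verification of Condition~$(\#)$ for $\Pc'$.} Let $[b,a]$ and $[d,c]$ be inner intervals of $\Pc'$ in the arrangement of Figure~\ref{Figura: conditions for Groebner basis}(A); since $\Pc' \subseteq \Pc$, they are also inner intervals of $\Pc$. Condition~$(\#)$ for $\Pc$ supplies a witness inner interval $I$ of $\Pc$ whose antidiagonal corners are $\{b,g\}$ or $\{e,c\}$. If $X \in I$, the key observation forces $v$ to equal one of these four corners; but $b, e$ are corners of $[b,a]$ and $c, g$ are corners of $[d,c]$, so applying the key observation to the appropriate smaller interval yields $X \in [b,a]$ or $X \in [d,c]$, contradicting $[b,a], [d,c] \subseteq \Pc'$. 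Hence $X \notin I$, so $I$ is an inner interval of $\Pc'$, and $\Pc'$ satisfies Condition~$(\#)$.

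\emph{Graph equality $G_\Pc \setminus \{v\} = G_{\Pc'}$.} Each edge of $G_\Pc$ is the antidiagonal pair of a unique inner interval of $\Pc$. By the key observation, edges incident to $v$ correspond precisely to inner intervals of $\Pc$ containing $X$, while edges not incident to $v$ correspond to inner intervals of $\Pc$ not containing $X$, which are exactly the inner intervals of $\Pc'$. Thus the edge sets of $G_\Pc \setminus \{v\}$ and $G_{\Pc'}$ coincide, and the vertex sets also agree, since $v$ is the only corner of $X$ not shared with other cells of $\Pc'$. The main obstacle is the case analysis in verifying Condition~$(\#)$; the key observation serves as the unifying technical lever, both identifying when $v$ can be a corner of an inner interval and supplying the required contradictions uniformly across the cases.
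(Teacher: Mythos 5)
Your proof is correct and takes the same route as the paper, whose entire proof of this lemma is the one-line appeal to ``our choice of $v$'': your key observation is precisely the remark already recorded in Setup~\ref{subsec:setup} (that $v$ can only occur as the left anti-diagonal corner of an inner interval) together with its converse, and both the case analysis for Condition~($\#$) and the edge-set identification follow from it exactly as you describe. The only caveat is your closing claim that the vertex sets agree because $v$ is the only corner of $X$ not shared with other cells of $\Pc'$ --- this can fail (for instance, when $X$ is the unique cell of its column and no cell of $\Pc$ lies diagonally below-left of it, the lower-left corner of $X$ also drops out of $V(\Pc')$), but the discrepancy consists only of isolated vertices and is an imprecision already present in the paper's own formulation of the lemma rather than one you introduced.
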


\begin{proof}
 The statement follows from our choice of $v$.
\end{proof}


\begin{Proposition}\label{Prop: Polyomino P''}
   Let $X$ be the top-left corner cell and $v$ be the top-left corner of a convex collection of cells $\Pc$ such that $\Pc$ satisfies condition~$(\#)$. We define a collection of cells $\Pc''$ as follows:
   \[
   \Pc''=\begin{cases} 
      \Lc_{\Pc_v} \cup \overline{\Dc_{\Pc_v}}, & \text{if } \Dc_{\Pc_v} \neq \emptyset, \\
      \Lc_{\Pc_v} * \overline{\Rc_{\Pc_v}}, & \text{otherwise}.
   \end{cases}
   \]
   Then $\Pc''$ also satisfies condition~$(\#)$, and $G_{\Pc} \setminus N[v] \cong G_{\Pc''}$.
\end{Proposition}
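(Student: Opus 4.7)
I verify the two assertions by case analysis on $\Pc''$, using Assumption~\ref{assump:rotation} and Proposition~\ref{prop:setup}. For condition~$(\#)$ on $\Pc''$, consider first the case where $\Pc''=\Lc_{\Pc_v}\cup\overline{\Dc_{\Pc_v}}$: the two summands lie in disjoint column ranges ($1,\ldots,k-1$ versus $k+1,\ldots,m$), so every inner interval of $\Pc''$ lies entirely in one summand and is also an inner interval of $\Pc$. Therefore condition~$(\#)$ on $\Pc$ immediately yields condition~$(\#)$ on $\Pc''$. In the complementary case $\Pc''=\Lc_{\Pc_v}\ast\overline{\Rc_{\Pc_v}}$, I argue that any inner interval of $\Pc''$ can be ``un-shifted'' to an inner interval of $\Pc$ by translating the part sitting in shifted $\overline{\Rc_{\Pc_v}}$ back by $(p+1-k,0)$ and filling the gap with the appropriate cells of $\Pc_v$; the presence of these cells is guaranteed by the Ferrers structure of $\Pc_v$ together with the fact that $[Y,Z]\subseteq\Pc$ whenever $\Ec_{\Pc_v}\neq\emptyset$. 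The condition~$(\#)$ constraints on $\Pc$ then transfer back through the shift to give condition~$(\#)$ on $\Pc''$.

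\textbf{Graph isomorphism.} To establish $G_\Pc\setminus N[v]\cong G_{\Pc''}$, I first describe $N_{G_\Pc}[v]$. Because $v$ is the left anti-diagonal corner of every inner interval of $\Pc$ that contains $v$, each neighbor $w\in N_{G_\Pc}(v)$ is the right anti-diagonal corner of such an inner interval, hence $w\in V(\Pc_v)$ and lies strictly right of and strictly below $v$. The Ferrers shape of $\Pc_v$ then gives that $N_{G_\Pc}(v)$ is exactly the set of all such vertices. Consequently, $V(\Pc_v)\setminus N_{G_\Pc}[v]$ reduces to the vertices along the top edge of $\Pc_v$ strictly right of $v$ and along the left edge of $\Pc_v$ strictly below $v$. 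These vertices either coincide with vertices of $V(\Lc_{\Pc_v})$, $V(\overline{\Rc_{\Pc_v}})$, or $V(\overline{\Dc_{\Pc_v}})$ via the gluing identifications of Notation~\ref{Notation:bar+star}, or they become isolated vertices in $G_\Pc\setminus N[v]$. In either case they carry no edge information, so the natural vertex map $\phi$, defined as the identity on $V(\Lc_{\Pc_v})\cup V(\overline{\Dc_{\Pc_v}})$ and as the backward translation by $(p+1-k,0)$ on $V(\overline{\Rc_{\Pc_v}})$, descends to the desired isomorphism (up to isolated vertices, which do not affect the $h$-polynomial calculations used in Theorem~\ref{Thm:main}).

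\textbf{Edge correspondence and main obstacle.} It remains to check that $\phi$ is an edge-preserving bijection. An edge of $G_\Pc\setminus N[v]$ corresponds to an inner interval of $\Pc$ whose anti-diagonal corners avoid $N_{G_\Pc}[v]$; this forces the interval to lie outside the ``interior'' of $\Pc_v$, so it fits into $\Lc_{\Pc_v}$, $\overline{\Dc_{\Pc_v}}$, $\overline{\Rc_{\Pc_v}}$, or it crosses the gluing column in the case $\Pc''=\Lc_{\Pc_v}\ast\overline{\Rc_{\Pc_v}}$. In the crossing subcase the interval traverses a contiguous block of rows of $\Pc_v$ from $\Lc_{\Pc_v}$ into $\overline{\Rc_{\Pc_v}}$, and the shift $\phi$ carries it to the corresponding inner interval of $\Pc''$ with anti-diagonal corners preserved by construction. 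The principal obstacle is precisely this crossing case: one must verify that the cells of $\Pc_v$ bridging $\Lc_{\Pc_v}$ and $\overline{\Rc_{\Pc_v}}$ are always present, so that the lift produces a valid inner interval of $\Pc$, and that its shift is a valid inner interval of $\Pc''$. This rests on the convexity of $\Pc$, the Ferrers structure of $\Pc_v$, and condition~$(\#)$, which together guarantee that the required cells exist on both sides. Once this bookkeeping is complete, $\phi$ is an edge-preserving bijection, proving $G_\Pc\setminus N[v]\cong G_{\Pc''}$.
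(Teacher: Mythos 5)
Your strategy for the first assertion coincides with the paper's: split into the cases $\Lc_{\Pc_v}\cup\overline{\Dc_{\Pc_v}}$ and $\Lc_{\Pc_v}\ast\overline{\Rc_{\Pc_v}}$, observe that in the first case the two summands are disjoint so the two inner intervals of the $(\#)$--configuration lie in a single summand (one must still note, as the paper does, that the \emph{witnessing} inner interval produced by condition~$(\#)$ in $\Pc$ also lands inside that summand), and in the second case lift inner intervals of $\Pc''$ across the seam to inner intervals of $\Pc$ using row convexity. That half is essentially the paper's argument, at a comparable level of detail.

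The graph-isomorphism half contains a genuine gap. You claim that the surviving vertices of $V(\Pc_v)\setminus N_{G_\Pc}[v]$ (the top and left edges of $\Pc_v$, i.e.\ $H\cup V$ minus $v$) ``carry no edge information,'' and you define $\phi$ as the identity on $V(\Lc_{\Pc_v})\cup V(\overline{\Dc_{\Pc_v}})$. Both statements fail when $\Dc_{\Pc_v}\neq\emptyset$ (and symmetrically for $\Rc_{\Pc_v}$). Every vertex $w\in V(\Dc_{\Pc_v})\cap V(\Pc_v)$ lies strictly to the right of and strictly below $v$ and belongs to $V(\Pc_v)$, hence $w\in N_{G_\Pc}(v)$ and is \emph{deleted} from $G_\Pc\setminus N[v]$; so the identity map on $V(\overline{\Dc_{\Pc_v}})$ has no preimage for these vertices. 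Their role is played by the vertices of $H$ above them: an inner interval of $\Pc$ with upper-left anti-diagonal corner $u\in H$ that runs down through $\Pc_v$ and ends deep inside $\Dc_{\Pc_v}$ has both anti-diagonal corners outside $N[v]$, so $u$ retains genuine edges in $G_\Pc\setminus N[v]$, and the isomorphism must send $u$ to the corresponding top boundary vertex of $\Dc_{\Pc_v}$ (this is precisely the identification $\pi(\Dc_{\Pc_v})$ that the paper sets up in Remark~\ref{Remark: Isolated vertices of the graph}), matching such an edge with the truncated interval lying entirely in $\overline{\Dc_{\Pc_v}}$. Your edge analysis, which only allows intervals contained in one piece or crossing the gluing column, misses exactly these $H$-to-$\Dc_{\Pc_v}$ and $V$-to-$\Rc_{\Pc_v}$ straddling intervals. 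Only the vertices of $(H\cup V)\setminus\bigl(\pi(\Dc_{\Pc_v})\cup\Gamma(\Rc_{\Pc_v})\bigr)$ are genuinely isolated, and those are counted separately by the integer $d$ in Proposition~\ref{Proposition: decomposition Hilbert series}; conflating the two kinds of surviving boundary vertices is where your bijection, as written, breaks down.
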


\begin{proof}
We first show that $\Pc''$ satisfies condition~$(\#)$. Assume that in $\Pc''$ there exist two inner intervals $[b,a]$ and $[d,c]$ as described in Figure~\ref{Figura: conditions for Groebner basis}(A).

Consider first the case $\Pc'' = \Lc_{\Pc_v} \cup \overline{\Dc_{\Pc_v}}$. Since $\Lc_{\Pc_v} \cap \overline{\Dc_{\Pc_v}} = \emptyset$ (see the last line in Setup~\ref{subsec:setup}), we have either $[b,a], [d,c] \subset \Lc_{\Pc_v}$ or $[b,a], [d,c] \subset \overline{\Dc_{\Pc_v}}$.

Suppose $[b,a], [d,c] \subset \Lc_{\Pc_v}$. Since $\Pc$ satisfies Condition~$(\#)$, it follows that either $b$ and $g$ or $e$ and $c$ are antidiagonal corners of an inner interval of $\Pc$, as in Figure~\ref{Figura: conditions for Groebner basis}(A). Suppose $b$ and $g$ are such corners of an inner interval $I$ in $\Pc$. By the construction of $\Lc_{\Pc_v}$, we have $I \subseteq \Lc_{\Pc_v}$. A similar argument applies if $[b,a], [d,c] \subset \overline{\Dc_{\Pc_v}}$.

Now consider the case $\Pc'' = \Lc_{\Pc_v} * \overline{\Rc_{\Pc_v}}$. The desired conclusion follows by an analogous argument, using the observation that $\Lc_{\Pc_v} * \overline{\Rc_{\Pc_v}}$ is simply obtained by removing $\Pc_v$ from $\Pc$.

The isomorphism $G_{\Pc} \setminus N[v] \cong G_{\Pc''}$ follows directly from the construction of $\Pc''$.
\end{proof}



\begin{Remark} \label{Remark: Isolated vertices of the graph}
    Let $\cP$ be a weakly connected convex collection of cells that satisfies the Condition~($\#$).  Let $v$ be the top-left corner of $\cP$ and $X$ be the top-left corner cell of $\cP$, as in Setup~\ref{subsec:setup}. Denote by $H$ and $V$ the horizontal and vertical maximal edge intervals of $\cP$ containing $v$, respectively. 
    From the results discussed so far, we can establish a natural identification of the vertices $V(\Dc_{\Pc_v})\cap V(\cP_v)$ (resp. $V(\Rc_{\Pc_v})\cap V(\cP_v)$)  with some of vertices of $H$ (resp. $V$). Specifically:
    \begin{itemize}
        \item for $\Dc_{\Pc_v}\neq \emptyset$, we identify each vertex $(i,j) \in V(\Dc_{\Pc_v})\cap V(\cP_v)$ with $(i,n)\in H$. In this case, we define $\pi(\Dc_{\Pc_v})$ as the set of vertices of $H$ corresponding to this identification. For convenience, we set $\pi(\Dc_{\Pc_v})=\emptyset$ if $\Dc_{\Pc_v}=\emptyset$.
        \item for $\Rc_{\Pc_v}\neq \emptyset$, we identify each vertex $(p+1,j) \in V(\Rc_{\Pc_v})\cap V(\cP_v)$ with $(k,j)\in V$. In this case, we define $\pi(\Rc_{\Pc_v})$ as the set of the vertices of $V$ corresponding to this identification. For convenience, we set $\pi(\Rc_{\Pc_v})=\emptyset$ if $\Rc_{\Pc_v}= \emptyset$. We set $\Gamma(\Rc_{\Pc_v})=\pi(\Rc_{\Pc_v})\cup\left(V(\Lc_{\Pc_v})\cap V(\cP_v)\right)$.
    \end{itemize}
Due to Proposition \ref{prop:setup}, we have $\pi(\Dc_{\Pc_v})\cap\Gamma(\Rc_{\Pc_v})=\emptyset$. Define the integer $d=\vert H\cup V\vert - \vert \pi(\Dc_{\Pc_v})\vert-\vert \Gamma(\Rc_{\Pc_v})\vert.$  
\end{Remark}

The notation introduced in Remark~\ref{Remark: Isolated vertices of the graph} is useful for the following result.

\begin{Proposition}\label{Proposition: decomposition Hilbert series}
    Let $\cP$ be a weakly connected convex collection of cells such that $\Pc$ satisfies Condition~{\em($\#$)}. Let $v$ be the top-left corner of $\cP$ and $X$ be the top-left corner cell of $\cP$. Then
    \[
    \rHP_{K[\cP]}(t) = \rHP_{K[\cP']}(t) + \frac{t}{(1 - t)^{d}} \rHP_{K[\cP'']}(t),
    \]    
    where $\cP'=\cP\setminus\{X\}$, $\cP''$ is the convex collection of cells defined in Proposition~\ref{Prop: Polyomino P''}, and $d$ is as defined in Remark~\ref{Remark: Isolated vertices of the graph}. 
\end{Proposition}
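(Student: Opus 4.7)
The plan is to reduce everything to a Hilbert series computation for an edge-ideal quotient via Gr\"obner deformation, and then apply the standard short exact sequence at the variable $x_v$. Since $\Pc$ satisfies Condition~$(\#)$, Theorem~\ref{Theorem: Qureshi condition to have Groebner basis} yields $\ini_{<_{\rev}}(I_\Pc) = I(G_\Pc)$; as Hilbert series are preserved by passage to the initial ideal, $\rHP_{K[\Pc]}(t) = \rHP_{S_\Pc/I(G_\Pc)}(t)$, and similarly for $\Pc'$ and $\Pc''$, which satisfy Condition~$(\#)$ by Lemma~\ref{Lemma: Polyomino P'} and Proposition~\ref{Prop: Polyomino P''}. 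It therefore suffices to prove the asserted identity with each $K[\cdot]$ replaced by the corresponding edge-ideal quotient.

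Let $v$ denote the top-left corner of $\Pc$. The short exact sequence
\begin{equation*}
0 \to \bigl(S_\Pc/(I(G_\Pc):x_v)\bigr)(-1) \xrightarrow{\,\cdot x_v\,} S_\Pc/I(G_\Pc) \to S_\Pc/(I(G_\Pc), x_v) \to 0
\end{equation*}
gives the additive decomposition
$\rHP_{S_\Pc/I(G_\Pc)}(t) = \rHP_{S_\Pc/(I(G_\Pc),x_v)}(t) + t\cdot \rHP_{S_\Pc/(I(G_\Pc):x_v)}(t).$
By Lemma~\ref{Lemma: Polyomino P'}, setting $x_v = 0$ produces the edge-ideal quotient of $G_{\Pc'}$, so the first summand equals $\rHP_{K[\Pc']}(t)$. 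For the colon term, the classical identity for edge ideals gives
\begin{equation*}
(I(G_\Pc):x_v) = (x_u : u \in N_{G_\Pc}(v)) + I\bigl(G_\Pc \setminus N_{G_\Pc}[v]\bigr),
\end{equation*}
so the corresponding quotient is a polynomial ring on $\{x_v\} \cup \{x_w : w \in V(\Pc)\setminus N_{G_\Pc}[v]\}$ modulo the edge ideal of $G_\Pc \setminus N_{G_\Pc}[v]$. The graph isomorphism $G_\Pc \setminus N_{G_\Pc}[v] \cong G_{\Pc''}$ supplied by Proposition~\ref{Prop: Polyomino P''} exhibits this quotient as a free polynomial extension of $K[\Pc'']$ by the variables that are not matched with vertices of $V(\Pc'')$ under the isomorphism.

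The main obstacle, and the real content of the argument, is to verify that this extension involves exactly $d$ free variables. The key geometric point is that $N_{G_\Pc}(v)$ consists precisely of those vertices of $\Pc_v$ lying in neither the top edge $H$ nor the left edge $V$, so the vertices of $\Pc_v$ surviving outside $N_{G_\Pc}[v]$ are exactly the vertices on $(H\cup V)\setminus\{v\}$. Under the construction of $\Pc''$ in Proposition~\ref{Prop: Polyomino P''}, the identifications recorded in Remark~\ref{Remark: Isolated vertices of the graph} absorb precisely the surviving vertices that correspond to $V(\Lc_{\Pc_v})\cap V(\Pc_v)$, to $\pi(\Dc_{\Pc_v})$, or to $\pi(\Rc_{\Pc_v})$. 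The unmatched ones, together with $x_v$ itself, account for $|H\cup V| - |\pi(\Dc_{\Pc_v})| - |\Gamma(\Rc_{\Pc_v})| = d$ free variables; a short case distinction following Assumption~\ref{assump:rotation}, according to which of $\Dc_{\Pc_v}$ or $\Rc_{\Pc_v}$ is nonempty, confirms the count in each situation. Consequently $\rHP_{S_\Pc/(I(G_\Pc):x_v)}(t) = (1-t)^{-d}\,\rHP_{K[\Pc'']}(t)$, and substituting into the exact-sequence identity yields the stated decomposition.
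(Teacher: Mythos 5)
Your proposal is correct and follows essentially the same route as the paper: Gr\"obner deformation to the edge-ideal quotient, the short exact sequence at $x_v$, identification of the two outer terms with $K[\Pc']$ and with $K[\Pc'']$ tensored with a polynomial ring via Lemma~\ref{Lemma: Polyomino P'} and Proposition~\ref{Prop: Polyomino P''}, and the count of $d$ free variables from $H\cup V$ minus the identified vertices. The only cosmetic difference is that you invoke the general colon identity $(I(G):x_v)=(x_u: u\in N_G(v))+I(G\setminus N_G[v])$ where the paper states the resulting description of $\ini_{<_{\rev}}(I_\cP):x_v$ directly.
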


\begin{proof}
     Since $\rHP_{K[\cP]}(t)=\rHP_{S_\cP/\ini_{<_{\rev}}(I_\cP)}(t)$, see for example \cite[Corollary 6.1.5]{HH_Book}, it is enough to study the Hilbert-Poincar\'{e} series of $S_\cP/\ini_{<_{\rev}}(I_\cP)$. Let $v$ be the top-left corner of $\cP$ and consider the following exact sequence: 
     \[
\begin{tikzcd}
  0 \arrow[r] &   S_{\cP}/(\ini_{<_{\rev}}(I_\cP):x_v)  \arrow[r, "x_v"] & S_{\cP}/\ini_{<_{\rev}}(I_\cP)  \arrow[r] &  S_{\cP}/(\ini_{<_{\rev}}(I_\cP),x_v)  \arrow[r] & 0 
\end{tikzcd}
\]

This exact sequence gives 
\[
\rHP_{S_{\cP}/\ini_{<_{\rev}}(I_\cP)}(t)=\rHP_{S_{\cP}/(\ini_{<_{\rev}}(I_\cP),x_v)}(t)+t \; \rHP_{S_{\cP}/(\ini_{<_{\rev}}(I_\cP):x_v)}(t).
\]
We now analyze $S_{\cP}/(\ini_{<_{\rev}}(I_\cP),x_v)$ and $S_{\cP}/(\ini_{<_{\rev}}(I_\cP):x_v)$:
     \begin{enumerate}
         \item From Lemma~\ref{Lemma: Polyomino P'}, it follows that $S_{\cP}/(\ini_{<_{\rev}}(I_\cP),x_v)\cong S_{\cP'}/\ini_{<_{\rev}}(I_{\cP'})=K[\cP']$. Hence 
         \[
         \rHP_{S_{\cP}/(\ini_{<_{\rev}}(I_\cP),x_v)}(t)=\rHP_{K[\cP']}(t).
         \]
        \item By Proposition~\ref{Prop: Polyomino P''} and the identity $\ini_{<_{\rev}}(I_\cP):x_v=\ini_{<_{\rev}}(I_{\cP''})+(x_u:u\in V(\cP_v)\setminus (H\cup V))$, we obtain
        \[
            S_{\cP}/(\ini_{<_{\rev}}(I_\cP):x_v)\cong K[\cP'']\tensor_K K[x_u:u\in (H\cup V)\setminus (\pi(\cD_\cP)\sqcup\Gamma(\cB_\cP))].
        \]
        Since the Hilbert-Poincar\'{e} series is multiplicative under tensor products over $K$, it follows that
        \[
        \rHP_{S_{\cP}/(\ini_{<_{\rev}}(I_\cP):x_v)}(t)=\frac{1}{(1 - t)^{d}} \rHP_{K[\cP'']}(t).
        \]       
     \end{enumerate}
     Thus, the claim follows directly from the previous two points.
\end{proof}

Now we present the proof of Theorem~\ref{Thm:main}.

\begin{proof}[Proof of Theorem~\ref{Thm:main}]
We proceed by induction on the number of cells in $\cP$. If $|\cP| = 1$, then $I_\cP$ is trivially the determinantal ideal of a $2 \times 2$ matrix. In this case, $h_{K[\cP]}(t) = 1 + t$, which coincides with the switching rook polynomial of $\cP$.

Assume now that $|\cP| > 1$, and suppose that the claim holds for every collection of cells $\cF$ with $1 \leq |\cF| < |\cP|$ satisfying Condition~$(\#)$ or Condition~$(\#')$, and such that each weakly connected component of $\cF$ is convex. That is, $h_{K[\cF]}(t)$ equals the switching rook polynomial of $\cF$.

We aim to show that $h_{K[\cP]}(t)$ is the switching rook polynomial of $\cP$. If $\cP$ has more than one weakly connected component, then the assertion follows from the inductive hypothesis and Remark~\ref{rem:tensorprod}, using the multiplicativity of both the Hilbert series and the switching rook polynomial under disjoint unions.

Now assume that $\cP$ consists of a single weakly connected component. We use the notation and assumptions introduced in Setup~\ref{subsec:setup}, Assumption~\ref{assump:rotation}, and Notation~\ref{Notation:bar+star}.

By Proposition~\ref{Proposition: decomposition Hilbert series}, and using the notation defined therein, we have
\begin{equation}\label{Equation: Hilbert series}
    \rHP_{K[\cP]}(t) = \rHP_{K[\cP']}(t) + \frac{t}{(1 - t)^{d}} \rHP_{K[\cP'']}(t).
\end{equation}

Denote by $\tilde{r}_{\cP'}(t) = \sum_{j=0}^{r(\cP')} \tilde{r}_j(\cP') t^j$ and $\tilde{r}_{\cP''}(t) = \sum_{j=0}^{r(\cP'')} \tilde{r}_j(\cP'') t^j$ the switching rook polynomials of $\cP'$ and $\cP''$, respectively. Note that $\cP'$ and $\cP''$ are collections of cells with convex weakly connected components and satisfy $|\cP'|, |\cP''| < |\cP|$. 

Moreover, by Lemma~\ref{Lemma: Polyomino P'} and Proposition~\ref{Prop: Polyomino P''}, both $\cP'$ and $\cP''$ satisfy Condition~($\#$). Therefore, by the inductive hypothesis, we have $h_{K[\cP']}(t) = \tilde{r}_{\cP'}(t)$ and $h_{K[\cP'']}(t) = \tilde{r}_{\cP''}(t)$.

Since $\cP$, $\cP'$, and $\cP''$ are simple collections of cells, by \cite[Proposition~1.3]{CNJ}, we have:
\[
\dim K[\cP] = |V(\cP)| - |\cP|, \quad \dim K[\cP'] = |V(\cP')| - |\cP'|, \quad \text{and} \quad \dim K[\cP''] = |V(\cP'')| - |\cP''|.
\]

Using Equation~\eqref{Equation: Hilbert series}, we can rewrite the Hilbert series as:
\begin{equation}\label{Equation: decomposition h-polynomial into rook polynomials}
    \rHP_{K[\cP]}(t) = \frac{h_{K[\cP]}(t)}{(1 - t)^{|V(\cP)| - |\cP|}} 
    = \frac{\tilde{r}_{\cP'}(t)}{(1 - t)^{|V(\cP')| - |\cP'|}} 
    + \frac{t \tilde{r}_{\cP''}(t)}{(1 - t)^{|V(\cP'')| - |\cP''| + d}}.
\end{equation}
Note that $|V(\cP')| = |V(\cP)| - 1$ and $|\cP'| = |\cP| - 1$, which implies that 
\[
|V(\cP')| - |\cP'| = |V(\cP)| - |\cP| = \dim K[\cP].
\]

We now prove that 
\[
|V(\cP'')| - |\cP''| + d = |V(\cP)| - |\cP|.
\]

Indeed, we have:
\[
|V(\cP'')| = |V(\cP)| - |V(\cP_v)| + \pi(\Dc_{\Pc_v}) + |\Gamma(\Rc_{\Pc_v})|, \quad \text{and} \quad
|\cP''| = |\cP| - |\cP_v|.
\]

Observe that there is a one-to-one correspondence between the cells of $\cP_v$ and their lower-right corners, which correspond exactly to $V(\cP_v) \setminus (H \cup V)$, where $H$ and $V$ are the horizontal and vertical maximal edge intervals of $\cP$ containing $v$. Consequently, we obtain $|\cP_v| = |V(\cP_v)| - |H \cup V|,$ which leads to:
\[
|\cP''| = |\cP| - |V(\cP_v)| + |H \cup V|.
\]

Thus, we can compute:
\begin{align*}
|V(\cP'')| - |\cP''| + d 
&= |V(\cP)| - |\cP| - |H \cup V| - \pi(\Dc_{\Pc_v}) - |\Gamma(\Rc_{\Pc_v})| + d \\
&= |V(\cP)| - |\cP| - d + d = |V(\cP)| - |\cP|.
\end{align*}

This confirms the claim. Hence, by Equation~(2), we obtain that $h_{K[\cP]}(t) = \tilde{r}_{\cP'}(t) + t\tilde{r}_{\cP''}(t).$ Now, we write 
\[
 \tilde{r}(t):= \tilde{r}_{\cP'}(t) + t\tilde{r}_{\cP''}(t) = \sum_{i=0}^{r(\cP)} \left( \tilde{r}_i(\cP') + \tilde{r}_{i-1}(\cP'') \right) t^i ,
\]
where we set $\tilde{r}_i(\cP') = 0$ for $i \geq r(\cP')$ and $\tilde{r}_i(\cP'') = 0$ for $i \geq r(\cP'')$.

We prove that $\tilde{r}(t)$ is the switching rook polynomial of $\cP$. Fix $i \in \{0, \dots, r(\cP)\}$ and we show that the $i$-th coefficient of $\tilde{r}(t)$ represents the number of $i$-rook configurations of $\cP$, up to switches. To establish this, observe the following:

\begin{enumerate}
    \item $\tilde{r}_i(\cP')$ is the cardinality of $\tilde{\cR}_{i}(\cP')$, which can be viewed as the number of $i$-rook configurations in $\cP$, up to switches, such that no rook is placed in the top-left corner cell $X$.
    
    \item Assume that a rook $T_1$ is placed on $X$, and let $\cC$ be an $(i-1)$-rook configuration in $\cP$. If a rook of $\cC$, say $T_2$, lies on a cell of $\cP_v$, then $T_1$ and $T_2$ occupy anti-diagonal cells of some inner interval $\cP_I$ of $\cP_v$. This implies that $T_1$ and $T_2$ are in switching position. Hence, the configuration 
    \[
    \cC' := (\cC \setminus \{T_1, T_2\}) \cup \{T_1', T_2'\},
    \]
    where $T_1'$ and $T_2'$ are the rooks placed in the diagonal cells of $\cP_I$, belongs to $\cR_i(\cP')$. However, such a configuration has already been counted in point (1). Therefore, we only need to count the $(i-1)$-rook configurations in $\cP$, up to switches, where all rooks lie in $\cP \setminus \cP_v$. Accordingly, the number of $i$-rook configurations in $\cP$, up to switches, where a rook is placed on $X$, coincides with the number of $(i-1)$-rook configurations in $\cP''$, up to switches, that is, $\tilde{r}_{i-1}(\cP'')$.
\end{enumerate}
From the previous two points, we easily deduce that the $i$-th coefficient of $\tilde{r}(t)$ is the number of $i$-rook configurations of $\cP$, up to switches. Hence $h_{K[\cP]}(t)$ is the switching rook polynomial of $\cP$.\\

To conclude the theorem, it only remains to show that $\mathrm{reg}({K[\cP]})$ is the rook number of $\cP$. This is a consequence of the result which we have just obtained. In fact, we know that, if $\cP$ is a simple collection of cells, then $K[\cP]$ is a Cohen-Macaulay domain by \cite[Proposition 1.3]{CNJ}, so $\reg{K[\cP]}$ is the degree of the $h$-polynomial of $K[\cP]$ by \cite[Corollary B.4.1]{Va}, which is the rook number of $\cP$.
\end{proof}

\begin{Example}\rm\label{Exa: rook polynomial}
By Corollary \ref{Coro of main}, and through computations carried out with \texttt{Macaulay2} (\cite{M2}), we obtain that for the convex polyominoes $\cP_1$ and $\cP_2$, illustrated in Figure \ref{fig:Ferrer diagram and stack} (D) and Figure \ref{Figura:example rook configuration} (A) respectively, and for the simple collection of cells $\cP_3$ depicted in Figure \ref{fig: Weakly connected convex collection of cells.} (C), the rook numbers are $r(\cP_1)=6$, $r(\cP_2)=5$, and $r(\cP_3)=11$. Moreover, the corresponding switching rook polynomials are 
\[
\begin{aligned}
\tilde{r}_{\cP_1}(t) = h_{K[\cP_1]}(t) &= 1 + 29t + 215t^{2} + 533t^{3} + 459t^{4} + 121t^{5} + 7t^{6}, \\
\tilde{r}_{\cP_2}(t) = h_{K[\cP_2]}(t) &= 1 + 23t + 139t^{2} + 302t^{3} + 257t^{4} + 81t^{5}+ 7t^{6}, \\
\tilde{r}_{\cP_3}(t) = h_{K[\cP_3]}(t) &= 1 + 19t + 152t^2 + 676t^3 + 1851t^4 + 3259t^5 + 3742t^6 +\\
&\quad + 2788t^7 + 1319t^8 + 379t^9 + 60t^{10} + 4t^{11}.
\end{aligned}
\]

\noindent By using a standard machine and the proposed algorithm (\cite{N}), the computation of the rook number and  the switching rook polynomial for $\cP_2$ takes approximately 20 minutes, instead for $\cP_1$ and $\cP_3$ requires excessive resources, making it practically infeasible within a reasonable time. In contrast, the computation of the $h$-polynomial of the associated coordinate ring is extremely efficient by \texttt{Macaulay2}, being completed almost instantaneously (about $\sim 1$ second) in all three cases.
\end{Example}






	\end{document}